\documentclass[11pt]{article}

\usepackage[a4paper,margin=1in]{geometry}

\usepackage{amsfonts} 
\usepackage{amsmath} 
\usepackage{amssymb} 
\usepackage{amsthm} 

\usepackage{enumerate} 
\usepackage{dutchcal} 

\usepackage{graphicx} 
\usepackage{xcolor} 
\usepackage{hyperref} 
\hypersetup{
    colorlinks=true,
    linkcolor=blue,
    filecolor=magenta,
    urlcolor=blue,
}
\usepackage{tikz}
\usetikzlibrary{matrix,arrows,positioning,decorations.markings,decorations.pathmorphing}

\usepackage{mathtools} 
\usepackage[activate={true,nocompatibility},final,tracking=true,kerning=true,spacing=true,factor=1100,stretch=10,shrink=10]{microtype}

\usepackage{authblk}  
\usepackage{orcidlink} 

\newtheorem{theorem}{Theorem}

\newtheorem{definition}[theorem]{Definition}

\newtheorem{lemma}[theorem]{Lemma}

\newtheorem{proposition}[theorem]{Proposition}
\newtheorem{remark}[theorem]{Remark}

\newcommand{\R}{\mathbb{R}}
\newcommand{\g}{\tilde{\mathfrak{g}}}
\newcommand{\kk}{\tilde{\mathfrak{k}}}

\newcommand{\A}{\mathcal{A}}
\newcommand{\vol}{\mathbf{v}}
\newcommand{\tp}{\tilde{p}}
\newcommand{\pr}{\mathrm{pr}}
\newcommand{\Hor}{\mathrm{Hor}}
\newcommand{\Ad}{\mathrm{Ad}}
\newcommand{\Curv}{\mathrm{Curv}}

\newcommand{\llangle}{\mathopen{\langle\!\langle}}
\newcommand{\rrangle}{\mathclose{\rangle\!\rangle}}

\title{Lie-Poisson reduction in principal bundles by a subgroup of the structure group}

\author[1]{M.\'A. Berbel \orcidlink{0000-0002-4269-9599}}  
\author[2]{M. Castrill\'on L\'opez \orcidlink{0000-0001-7673-9870}}

\affil[1]{Departamento de Matemática Aplicada, Universidad Pontificia Comillas,
Alberto Aguilera 25, 28015-Madrid, Spain\\
    \texttt{maberbel@comillas.edu}}
\affil[2]{Departamento de \'Algebra, Geometr\'\i a y Topolog\'\i a, Universidad Complutense de Madrid, Plaza de las Ciencias 3, 28040-Madrid, Spain\\
    \texttt{mcastri@mat.ucm.es}}
\date{}

\begin{document}

\maketitle
 
\begin{abstract}
We study Hamiltonian field theories on the multisymplectic bundle of a principal
$G$-bundle with Hamiltonian densities invariant under a subgroup $H \subset G$.
Using the covariant bracket formulation, we reduce the polysymplectic space and
derive the corresponding reduced observables, brackets, and equations of motion,
yielding a Lie--Poisson reduction by a subgroup for field theories. We also address
the reconstruction problem, characterizing reconstruction in terms of the flatness
of an associated connection. Several examples, including the heavy top, molecular
strands with broken symmetry, and affine principal bundles, illustrate the general
framework.
\end{abstract}
\vspace{4mm}

\noindent \em Keywords: \em Lie-Poisson, bracket, field theories, Hamiltonian, reduction.

\vspace{4mm}

\noindent \em Mathematics Subject Classification 2020: \em 70S05, 58D19, 70G45, 70S10



\section{Introduction}


Reduction in classical Field Theories with symmetry is a central tool for simplifying their constitutive equations as well as uncovering their underlying geometric structure, both in the Lagrangian and in the Hamiltonian frameworks. On the Hamiltonian side, which is the focus of this paper, several covariant approaches to reduction have been developed over the years. One line of research is based on the introduction of multimomentum maps, extending the Marsden–Weinstein reduction theorem to the multisymplectic setting (see, for instance, \cite{MultiGroupoids, multi_formalism, multivector_hamiltonian, RemarksMultiSymp, Madsen12, Madsen13}). A complementary approach, closer in spirit to Lie–Poisson reduction in Mechanics, relies on a covariant formulation of Poisson brackets (see \cite{GradedPoisson, AffineLieP, PoissonForms, CovariantPoisson}), allowing the dynamics to be expressed intrinsically in terms of reduced brackets and observables.
While these methods are well understood when the symmetry group coincides with the structure group of the underlying bundle, their extension to more general symmetry settings present additional geometric subtleties. More precisely, in \cite{someremarks} the covariant bracket formulation introduced in \cite{PoissonForms, CovariantPoisson} is reduced in the case of a field theory whose configuration bundle is a principal bundle $P \to M$ and the symmetry group equals the structure group $G$. For that, the polysymplectic bundle (see Definitions below) 
\begin{equation*}
    \Pi_P = TM \otimes V^*P \otimes \bigwedge\nolimits^n T^*M,
\end{equation*}
reduces to 
\begin{equation} \label{eq:intro-someremarks}
\Pi_P / G \cong TM \otimes \g^* \otimes \bigwedge\nolimits^n T^*M,
\end{equation}
where $\g$ denotes the adjoint bundle associated with $P$ and $n=\dim M$. This canonical identification leads to a covariant version of Lie--Poisson reduction for field theories, expressed entirely in terms of reduced brackets and observables.

More recently, this Hamiltonian reduction scheme was extended in \cite{PoissonPoincare} to arbitrary bundles $P\to M$ in which a Lie group $K$ is acting vertically and properly. In particular, if $P$ is a $G$-principal bundle and the Hamiltonian is invariant under a Lie subgroup $K \subset G$. In this setting, $P \to P/K$ is a principal bundle and, upon choosing a principal connection $\A$ on it, the reduced polysymplectic space admits a splitting of the form
\begin{equation} \label{eq:intro-poisson-poincare}
\Pi_P / K \cong_{\A}
\left( TM \otimes \kk^* \otimes \bigwedge\nolimits^n T^*M \right) \oplus \Pi_{P/K},
\end{equation}
where $\kk$ is the adjoint bundle of $P \to P/K$. However, the identification \eqref{eq:intro-poisson-poincare} is not canonical, as it depends explicitly on the choice of that auxiliary connection $\A$. This dependence motivates the search for a reduction procedure that avoids the introduction of additional, non-canonical structures.

The purpose of the present paper is to develop a Hamiltonian reduction procedure for field theories on principal bundles that are invariant under a subgroup of the structure group such that it is intrinsic and does not depend on the choice of a principal connection. Our approach extends the covariant Poisson reduction developed in \cite{someremarks} to the case of subgroup symmetry, leading to a description of the reduced polysymplectic space together with a reduced covariant bracket governing the dynamics. In this sense, the results obtained here may be viewed as a Hamiltonian counterpart of the Euler--Poincar\'e reduction by a subgroup introduced in \cite{EPsubgroup}, where the quotient of the first jet bundle $(J^1P)/K$ is canonically identified with a fibered product of the bundle of connections and the reduced configuration space,
\begin{equation} \label{eq:intro-poisson-subgroup}
(J^1P)/K \cong
\mathcal{C} \times_M P/K,
\end{equation}
yielding a zero--order variational problem subject constraints.

From the Hamiltonian perspective, we show that an analogous geometric structure emerges in the reduced polysymplectic space: It admits a natural splitting as the fibered product of a bundle related with the adjoint variables in \eqref{eq:intro-someremarks} and the reduced bundle $P/K$, without the introduction of any auxiliary connection. We further address the corresponding reconstruction problem and characterize the obstruction to reconstruction in terms of the curvature of certain connection. We note that nature of the compatibility conditions for reconstructions is a well-known fact in the Lagrangian side, but rather unclear in the Hamiltonian side . The analysis here solves the reconstruction problem on principal bundle in full generality,


The paper is organized as follows. In Section~\ref{sec: multi}, we review the main elements of Hamiltonian field theories, with particular emphasis on the covariant bracket formulation and the class of observables naturally defined on the polysymplectic space~$\Pi_{P}$. Section~\ref{sec: reduced polysymplectic} is devoted to the geometric structure of the reduced polysymplectic space~$\Pi_{P}/K$, where we analyze its properties and establish its identification with the fibered product $\Pi_{P}/G \times_{M} P/K$. The core results of the paper are presented in Section~\ref{sec: reduction}, which develops a Lie--Poisson reduction procedure by a subgroup. There, we introduce the reduced observables, define the reduced covariant brackets, and derive the corresponding reduced dynamics. Section~\ref{sec: reconstruction} addresses the reconstruction problem, identifying the flatness of a connection as the obstruction to reconstruction. Finally, Section~\ref{sec: examples} illustrates the theory through two relevant examples: $SO(3)$-molecular strands with broken symmetry and field theories defined on affine principal bundles.

\section{Multisymplectic and Polysymplectic bundles} \label{sec: multi}  

Given a fiber bundle $\pi_{M,P}:P\rightarrow M$, the \em dual jet bundle \em $J^1P^* $ is a vector bundle over $P$ whose fiber at a point $y \in P_x$, $x\in M$ is
\begin{equation*}
    J^1_yP^*=\mathrm{Aff}(J^1_yP,\bigwedge\nolimits^nT_x^*M),
\end{equation*}
i.e., the set of affine maps from the jet bundle to the bundle of $n$-forms on $M$, $n=\mathrm{dim} M$. Given a fiber coordinate system $(x^i,y^a)$ on $P$, we define adapted fiber coordinates $(x^i, y^a, p^i_a, \tp)$ on $J^1P^*$ as those such that affine maps read
\begin{equation}
y_i^a\mapsto (\tp+p^i_ay^a_i)d^nx,
\end{equation}
where $d^nx=dx^1\wedge\cdots\wedge dx^n$. Since $J^1P^*$ is defined as a bundle of affine morphisms, it follows that $\mathrm{rank}(J^1P^*)=\mathrm{rank}(J^1P)+1$. 

As is well known, the space $J^1P^*$ is canonically isomorphic to the subbundle \(Z \subset \bigwedge\nolimits^n T^*P\) consisting of those \(n\)-forms that vanish when contracted with any pair of vertical vectors. This identification endows $J^1P^* \cong Z$ with a \em canonical \em $n$-form $\Theta$, defined at each point \(z \in Z\) by 
\begin{equation}
\Theta(z)(u_1,\dots,u_n)=z\left(T\pi_{P,\bigwedge\nolimits^nT_x^*P}u_1,\dots,T\pi_{P,\bigwedge\nolimits^nT_x^*P}u_n\right)
\end{equation}
where $u_i\in T_zZ$ for $i=1,\dots,n$. The \em canonical multisymplectic form is the closed \em $(n+1)$-form
\begin{equation}
\Omega=-d\Theta.
\end{equation}
In local coordinates, these forms have the local expressions:
\begin{equation}
\Theta=p_a^idy^a\wedge d^{n-1}x_i+\tp d^nx,
\end{equation}
\begin{equation}
\Omega=dy^a\wedge dp_a^i \wedge d^{n-1}x_i-d\tp\wedge d^nx.
\end{equation}
A comprehensive treatment of the multisymplectic formalism can be found in \cite{multi_formalism, multisymplectic, gotay}. Note that the multisymplectic bundle defined in this paper is a multisymplectic manifold. 

The \emph{polysymplectic bundle} is defined as
\begin{equation}
\Pi_P := TM \otimes V^*P \otimes \bigwedge\nolimits ^n T^*M,
\end{equation}
which is a vector bundle over $P$ whose rank coincides with that of the first jet bundle $J^1P$.
This bundle may be identified with the dual of the vector bundle $T^*M \otimes VP$ modeling $J^1P$. More precisely, for each $y\in P$ with $\pi_{M,P}(y)=x$, the affine map
\[
\varphi \in J^1_yP^* := \mathrm{Aff}\!\left(J^1_yP,\bigwedge\nolimits^n T_x^*M\right)
\]
admits an associated linear map
\[
\vec{\varphi} \in (T_x^*M \otimes V_yP)^* \otimes \bigwedge\nolimits^n T_x^*M
\cong (\Pi_P)_y.
\]
The projection $\varphi \mapsto \vec{\varphi}$ equips $J^1P^*$ with the structure of an affine bundle over $\Pi_P$ modeled on the rank-one vector bundle $\bigwedge\nolimits^n T^*M \to M$ (see \cite{AffineBracket}).

A \emph{Hamiltonian system} is given by a pair $(\Pi_P,\delta)$, where $\delta$ is a section of the affine bundle $J^1P^* \to \Pi_P$.
Pulling back the canonical form $\Theta$ on $J^1P^*$ along $\delta$ yields the Hamiltonian forms
\[
\Theta_\delta := \delta^*\Theta,
\qquad
\Omega_\delta := -d\Theta_\delta,
\]
which encode the dynamics.
A section $p\colon M \to \Pi_P$ is said to be a \emph{solution} of the Hamiltonian system if
\begin{equation}\label{Hameq}
p^*(i_X \Omega_\delta)=0
\end{equation}
for every vertical vector field $X\in\mathfrak{X}(\Pi_P)$.
Locally, each $p\in \Pi_P$ can be expressed in coordinates $(x^i,y^a,p^i_a)$ as
\[
p = p^i_a \frac{\partial}{\partial x^i} \otimes dy^a \otimes d^n x,
\]
and the section $\delta$ takes the form
\[
\delta(x^i,y^a,p^i_a)
= (x^i,y^a,p^i_a,H_\delta(x^i,y^a,p^i_a)).
\]
In these coordinates, the pulled-back canonical form reads
\begin{equation}\label{pullback canform}
\Theta_\delta
= p^i_a \, dy^a \wedge d^{n-1}x_i
+ H_\delta(x^i,y^a,p^i_a)\, d^n x.
\end{equation}

An Ehresmann connection $\Lambda$ on $P \to M$ induces a section
$\delta_\Lambda$ of $J^1P^* \to \Pi_P$ given by
\[
\delta_\Lambda(v_x \otimes \omega_y \otimes \vol)
= (\omega_y \circ \Lambda) \wedge i_{v_x}\vol\in Z_y\cong (J^1P^*)_y, 
\]
for $v_x \otimes \omega_y \otimes \vol
\in T_xM \otimes V_y^*P \otimes \bigwedge\nolimits^n T_x^*M$.
The difference $\mathcal{H} := \delta - \delta_\Lambda$ defines a map
$\mathcal{H}\colon \Pi_P \to \bigwedge\nolimits^n T^*M$, called the \emph{Hamiltonian density}.
Accordingly, a Hamiltonian system may equivalently be described by a triple
$(\Pi_P,\Lambda,\mathcal{H})$.

The local expression of \eqref{Hameq} yields the \emph{Hamilton--Cartan equations}
\begin{equation}\label{Hamilton-Cartan}
\frac{\partial H}{\partial p^i_a}
= \frac{\partial y^a}{\partial x^i} - \Lambda^a_i,
\qquad
-\frac{\partial H}{\partial y^a}
= \frac{\partial p^i_a}{\partial x^i}
+ \frac{\partial \Lambda^b_i}{\partial y^a}\, p^i_b,
\end{equation}
where $\Lambda^a_i$ denote the local coefficients of $\Lambda$ and
$\mathcal{H} = H\, d^n x$.
These equations admit an equivalent formulation in terms of a covariant Poisson bracket, as we now recall.

A differential form $F$ on $J^1P^*$ is said to be \emph{horizontal} if it vanishes upon contraction
with vertical vectors with respect to the projection $J^1P^* \to M$, that is,
\[
F = F_{i_1\cdots i_r}\, dx^{i_1} \wedge \cdots \wedge dx^{i_r}.
\]

An horizontal $r$-form $F$ is called \emph{Poisson} if there exists a vertical
$(n-r)$-multivector field $\chi_F$ on $J^1P^*$ such that
\begin{equation}
i_{\chi_F}\Omega = dF.
\end{equation}
Given Poisson forms $F$ and $E$ of degrees $r$ and $s$, respectively, their bracket
is defined as the $(r+s+1-n)$-form
\begin{equation}\label{Poisson bracket}
\{F,E\} = (-1)^{r(s-1)}\, i_{\chi_E} i_{\chi_F}\Omega,
\end{equation}
which is again a Poisson form.
As shown in \cite{ForgerPoisson, PoissonForms}, this operation defines a graded Poisson bracket
with a modified Leibniz rule.
In particular, Poisson $(n-1)$-forms descend to $\Pi_P$ and provide a bracket formulation
of the Hamilton--Cartan equations \eqref{Hamilton-Cartan}, in close analogy with the role
of affine functions on cotangent bundles in classical Hamiltonian mechanics. More precisely:

\begin{theorem}\emph{\cite[Proposition 5.2]{someremarks}} \label{th: bracket formulation}
A section $\pi$ of $\Pi_P\to M$ is a solution of a given Hamiltonian system $(\Pi_P,\Lambda,\mathcal{H})$, $\mathcal{H}=H\vol$ if and only if for any horizontal Poisson $(n-1)$-form $F$ the following equation holds true:
\begin{equation} \label{Hameqs with bracket}
\{F,H\}\vol\circ \pi=d(\pi^*F)-(d^hF)\circ \pi,
\end{equation}
where $d^hF$ is the horizontal differential of $F$ with respect to the connection on $\Pi_P$.
\end{theorem}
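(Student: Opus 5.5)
The plan is a local-coordinate verification that reduces \eqref{Hameqs with bracket} to the Hamilton--Cartan equations \eqref{Hamilton-Cartan}. The first step is to characterize horizontal Poisson $(n-1)$-forms. Write $F = F^i\, d^{n-1}x_i$ with $F^i$ functions on $J^1P^*$. From $i_{\chi_F}\Omega = dF$ with $\chi_F$ a vertical vector field and $\Omega = dy^a\wedge dp^i_a\wedge d^{n-1}x_i - d\tp\wedge d^nx$, one checks in the standard way (as in \cite{ForgerPoisson, PoissonForms}) that $F^i$ cannot depend on $\tp$ and must be affine in the momenta:
\[
F^i = f^i(x,y) + p^i_a\,\xi^a(x,y),\qquad \chi_F = \xi^a\partial_{y^a} - \frac{\partial f^i}{\partial y^a}\partial_{p^i_a} - p^i_b\frac{\partial \xi^b}{\partial y^a}\partial_{p^i_a}+(\ldots)\partial_{\tp},
\]
up to terms that do not affect brackets. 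In particular $F$ descends to $\Pi_P$. Next I compute $\{F,H\}$, interpreting the bracket with the Hamiltonian as $\{F,\Theta_\delta\}$, or equivalently via $\chi_F$ acting on $H$ modulo the connection contribution. This should give
\[
\{F,H\} = \xi^a\Big(\frac{\partial H}{\partial y^a} + p^i_b \frac{\partial\Lambda^b_i}{\partial y^a}\Big) - \Big(\frac{\partial f^i}{\partial y^a}+p^i_b\frac{\partial\xi^b}{\partial y^a}\Big)\Big(\frac{\partial H}{\partial p^i_a} + \Lambda^a_i\Big),
\]
with the sign conventions of \eqref{Poisson bracket}. The $\Lambda$ terms appear because the splitting $\delta=\delta_\Lambda+\mathcal{H}$ converts the $\tp$ coordinate into $H$ plus $p\Lambda$.

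The next step is the right-hand side. First, $d(\pi^*F) = \big(\partial_i F^i + \frac{\partial F^i}{\partial y^a}\partial_i y^a + \frac{\partial F^i}{\partial p^j_a}\partial_i p^j_a\big)\vol$ along $\pi$. Second, $d^hF$ is the horizontal differential with respect to the connection on $\Pi_P$ induced by $\Lambda$ (the lift of $\Lambda$ to $\Pi_P$, whose horizontal lift of $\partial_i$ is $\partial_i + \Lambda^a_i\partial_{y^a} - p^j_b\frac{\partial \Lambda^b_i}{\partial y^a}\partial_{p^j_a}$). Hence $d^hF = (\partial_i F^i + \Lambda^a_i \partial_{y^a}F^i - p^j_b \partial_{y^a}\Lambda^b_i\,\partial_{p^j_a}F^i)\vol$, with the exact index placement to be matched in the computation. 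Subtracting, the $\partial_iF^i$ terms cancel, leaving an expression linear in $(\partial_i y^a - \Lambda^a_i)$ and in $(\partial_i p^i_a + p^i_b\partial_{y^a}\Lambda^b_i)$, with coefficients $\partial_{y^a}F^i$ and $\xi^a$ respectively.

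Finally, I compare coefficients. Since $f^i$ and $\xi^a$ are arbitrary, the identity holding for all $F$ is equivalent to the identity holding separately for the $f$-part and for the $\xi$-part. Taking $\xi=0$ and $f^i$ arbitrary, the terms in $\partial_{y^a}f^i$ give the first Hamilton--Cartan equation. Taking $f=0$ and $\xi^a$ arbitrary, the terms in $\xi^a$ give the second, and the terms in $\partial_y\xi$ then hold automatically given the first. Conversely, if the Hamilton--Cartan equations hold, both sides agree for every $F$, and \eqref{Hamilton-Cartan} is equivalent to \eqref{Hameq}.

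The main obstacle is bookkeeping. One must pin down the precise form of $d^hF$ on $\Pi_P$ and the meaning of $\{F,H\}$ so that the $\Lambda$-dependent terms cancel exactly and the $\partial_{y}\xi\cdot p$ terms match. I would fix these by first checking the case $\Lambda^a_i=0$ locally, where the identity reduces to the known flat-case statement, and then tracking the $\Lambda$ corrections.
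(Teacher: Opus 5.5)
Your overall strategy is the right one: write $F^i=f^i(x,y)+p^i_a\xi^a(x,y)$, expand both sides of \eqref{Hameqs with bracket} in coordinates, and compare the coefficients of $\partial_{y^a}f^i$ and of $\xi^a$. The paper only cites \cite{someremarks} for this statement, but it runs exactly this kind of local comparison in the reduced setting in the proof of Theorem \ref{th: reduced dynamics}. Your right-hand side is also correct:
\[
d(\pi^*F)-d^hF\circ\pi=\Bigl[\frac{\partial F^i}{\partial y^a}\bigl(\partial_iy^a-\Lambda^a_i\bigr)+\xi^a\bigl(\partial_ip^i_a+p^i_b\,\partial_{y^a}\Lambda^b_i\bigr)\Bigr]d^nx .
\]
You did drop the $\Gamma$-terms of \eqref{eq: conn.poly}. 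They contribute $\xi^ap^k_a(\Gamma^i_{ik}-\Gamma^i_{ki})$, which vanishes for a symmetric $\Gamma$, and you should say so.

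The genuine gap is your formula for $\{F,H\}$. You put the connection into the bracket: you compute it as $+\chi_F\bigl(H+p^i_b\Lambda^b_i\bigr)$. With that formula, the difference of the $d^nx$-coefficients of the two sides is
\[
\xi^a\Bigl(\frac{\partial H}{\partial y^a}-\frac{\partial p^i_a}{\partial x^i}\Bigr)-\frac{\partial F^i}{\partial y^a}\Bigl(\frac{\partial H}{\partial p^i_a}+\frac{\partial y^a}{\partial x^i}\Bigr).
\]
Every $\Lambda$-term cancels, so the identity for all $F$ is equivalent to $\partial H/\partial p^i_a=-\partial_iy^a$ and $\partial H/\partial y^a=\partial_ip^i_a$. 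These are not \eqref{Hamilton-Cartan}: the connection terms are lost and the signs are flipped. So your claim that the $\partial_yf$-coefficients give the first Hamilton--Cartan equation fails. Your stated aim of making the $\Lambda$-terms ``cancel exactly'' is the wrong target, since the $\Lambda$-terms of \eqref{Hamilton-Cartan} must come from $d^hF$ and survive.

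In the theorem, $H$ is the density relative to $\Lambda$, i.e.\ $\mathcal{H}=\delta-\delta_\Lambda=H\vol$, and the bracket is connection-free:
\[
\{F,H\}=\frac{\partial F^i}{\partial y^a}\frac{\partial H}{\partial p^i_a}-\xi^a\frac{\partial H}{\partial y^a}=-\chi_F(H).
\]
This matches the local formula used in the proof of Proposition \ref{prop: reduced bracket}. You could instead bracket with $H+p^i_b\Lambda^b_i$ (via $-\chi_F$). But then it must be paired with the non-intrinsic coordinate term $d(\pi^*F)-\partial_iF^i\,d^nx$, not with $d(\pi^*F)-d^hF\circ\pi$. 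Replacing that term by $d^hF$ is precisely what moves $\Lambda$ out of the bracket, and doing both double-counts the connection. Your planned check at $\Lambda=0$ would expose the sign error but not this double counting.

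With the correct bracket the comparison goes through:
\begin{itemize}
\item the $\partial_{y^a}f^i$-coefficients give $\partial H/\partial p^i_a=\partial_iy^a-\Lambda^a_i$;
\item the undifferentiated $\xi^a$-terms give $-\partial H/\partial y^a=\partial_ip^i_a+p^i_b\,\partial_{y^a}\Lambda^b_i$;
\item the $p\,\partial_y\xi$-terms are then automatic, as you say.
\end{itemize}
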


\begin{remark} \label{rmk: conn. poly}
The connection on $\Pi_P\to M$ metioned in Theorem \ref{th: bracket formulation} is defined from the Ehresmann connection $\Lambda$ on $P\to M$ and any linear connection $\Gamma$ on $M$. Indeed, we know from \cite{kolarNatural,saundersGeometryJetBundles} that $\Lambda$ naturally extends to $VP\to M$ and $V^*P\to M$. Then, the linear connection on $M$ provides the extention to $\Pi_P=TM\otimes V^*P\otimes\bigwedge\nolimits^nT^*M$. In local coordinates, the corresponding horizontal lift is
\begin{align} \label{eq: conn.poly}
\frac{\partial}{\partial x^i}\mapsto\frac{\partial}{\partial x^i}+\Lambda^a_i\frac{\partial}{\partial y^a}+\left(-\frac{\partial \Lambda^b_i}{\partial y^a}p^j_b+\Gamma^j_{ik}p^k_a-\Gamma^k_{ik}p^j_a\right)\frac{\partial}{\partial p^j_a}.
\end{align}
\end{remark}
Poisson $(n-1)$-forms on $\Pi_P$ admit a particularly simple and useful description. 

\begin{proposition}\label{prop: n-1}\emph{\cite[Proposition 4.3]{someremarks}}
Any Poisson $(n-1)$-form on $\Pi_{P}$ can be written as
\begin{equation} \label{eq: n-1 forms}
F=\theta_X+\pr^*_{P,\Pi_P}\omega+\Upsilon,
\end{equation}
where $\omega$ is any horizontal $(n-1)$-form on $P$, $\Upsilon$ is an arbitrary closed form on $\Pi _P$, and $\theta _X$ is the horizontal $(n-1)$-form defined by an arbitrary vertical vector field $X\in \Gamma (VP)$ on $P$ as
\[
(\theta _X)_q= \alpha (X)\,\,i_u \nu
\]
for decomposable elements $q\in \Pi _P=TM\otimes V^*P\otimes \bigwedge\nolimits\nolimits ^n T^*M$, $q=u\otimes\alpha\otimes \nu$, and extended linearly to $\Pi_P$.
\end{proposition}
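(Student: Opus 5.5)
We argue in local adapted coordinates $(x^i,y^a,p^i_a)$ on $\Pi_P$ and then read off the intrinsic pieces. A horizontal $(n-1)$-form on $\Pi_P$ has the expression $F=F^i\,d^{n-1}x_i$, with $F^i=F^i(x,y,p)$ arbitrary functions. We take the multivector field $\chi_F$ to be a vertical vector field, since $n-(n-1)=1$. We then impose the Poisson condition $i_{\chi_F}\Omega=dF$, using the local form of $\Omega$ restricted appropriately (equivalently, working on $J^1P^*$ with $F$ pulled back, so that $\chi_F$ may have $\partial/\partial y^a$, $\partial/\partial p^i_a$ and $\partial/\partial\tp$ components). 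Writing
\[
\chi_F=\chi^a\frac{\partial}{\partial y^a}+\chi^i_a\frac{\partial}{\partial p^i_a}+\tilde\chi\frac{\partial}{\partial\tp},
\]
we contract $\Omega=dy^a\wedge dp^i_a\wedge d^{n-1}x_i-d\tp\wedge d^nx$. This gives
\[
i_{\chi_F}\Omega=\chi^a\,dp^i_a\wedge d^{n-1}x_i-\chi^i_a\,dy^a\wedge d^{n-1}x_i-\tilde\chi\, d^nx .
\]
We then compare with $dF=\frac{\partial F^i}{\partial y^a}dy^a\wedge d^{n-1}x_i+\frac{\partial F^i}{\partial p^j_a}dp^j_a\wedge d^{n-1}x_i+\frac{\partial F^i}{\partial x^i}d^nx$, and possibly also terms in $d\tp$ if $F$ is allowed to depend on $\tp$. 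That option is ruled out immediately, since no $d\tp\wedge d^{n-1}x_i$ term appears in the contraction, so $F$ descends to $\Pi_P$.

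Matching coefficients yields three conditions.
\begin{enumerate}[(i)]
\item $\partial F^i/\partial p^j_a=\delta^i_j\chi^a$. Hence $\partial F^i/\partial p^j_a=0$ for $i\neq j$, and $\partial F^i/\partial p^i_a=\chi^a$ is independent of $i$.
\item $\chi^i_a=-\partial F^i/\partial y^a$.
\item $\tilde\chi$ is fixed by the $d^nx$ term.
\end{enumerate}
The key step is to integrate condition (i). Differentiating once more shows that $\chi^a$ cannot depend on $p$: for $i\neq j$ we have $\partial\chi^a/\partial p^j_b=\partial^2F^i/\partial p^i_a\partial p^j_b=0$, and for $i=j$ we can use a different index $k$ (this requires $n\ge 2$; the case $n=1$ is mechanics, where $F$ is a function and the same conclusion is classical). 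Therefore $F^i=\chi^a(x,y)\,p^i_a+f^i(x,y)$, with $\chi^a$ independent of $i$.

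Next we identify the pieces intrinsically. The term $\chi^a(x,y)p^i_a\,d^{n-1}x_i$ is exactly $\theta_X$ for the vertical field $X=\chi^a\partial/\partial y^a$. This is checked from $(\theta_X)_q=\alpha(X)\,i_u\nu$ on $q=\frac{\partial}{\partial x^i}\otimes dy^a\otimes d^nx$, which gives $p^i_a X^a\,d^{n-1}x_i$. Coordinate independence of $X$ follows from the transformation law of the $p^i_a$, or more directly from the intrinsic definition of $\theta_X$. The remainder $f^i(x,y)\,d^{n-1}x_i$ is the pullback $\pr^*_{P,\Pi_P}\omega$ of a horizontal $(n-1)$-form $\omega$ on $P$.

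Finally we account for $\Upsilon$. Conversely, each of $\theta_X$ and $\pr^*\omega$ is Poisson, with $\chi$ read off from (i)–(ii), and Poisson forms are closed under addition. A closed form $\Upsilon$ is Poisson with $\chi=0$. The decomposition is therefore unique only up to such closed summands, which is why the statement includes an arbitrary closed $\Upsilon$; forms with $dF=0$ may be absorbed there.

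The main obstacle is to handle $\tilde\chi$ and the $d^nx$ component consistently, that is, to check that the $x$-derivatives impose no further restriction on $f^i$ and $\chi^a$. Since $\tilde\chi$ is free, the $d^nx$ equation is always solvable, so no constraint arises. The remaining care is in passing from the local description to the global one: $X$ and $\omega$ must be defined globally, and we obtain this by patching via the intrinsic formula for $\theta_X$.
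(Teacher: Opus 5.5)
Your coordinate computation is correct for $n\ge 2$, and it is the standard argument behind this result; the paper gives no proof of its own and only cites \cite{someremarks}. Matching $i_{\chi_F}\Omega$ with $dF$ on $J^1P^*$ gives $\partial F^i/\partial\tp=0$ and $\partial F^i/\partial p^j_a=\delta^i_j\chi^a$. Choosing $i\neq j$ then shows that $\chi^a$ does not depend on the momenta, so $F^i=\chi^a(x,y)p^i_a+f^i(x,y)$.

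\textbf{The $n=1$ aside is wrong.} For $n=1$, condition (i) reads $\partial F/\partial p_a=\chi^a$. Since $\chi^a$ may depend on $p$, this imposes no restriction at all, and every function $F(x,y,p)$ is a Poisson $0$-form. For instance, $F=\frac{1}{2}\sum_a p_a^2$ satisfies $i_{\chi}\Omega=dF$ with $\chi=p_a\,\partial/\partial y^a$. It is not affine in the momenta, so it is not of the form $\theta_X+\pr^*_{P,\Pi_P}\omega+\Upsilon$. The classical fact is that every function on phase space is Hamiltonian, which is the opposite of what you assert. So the statement is false for $n=1$ and must be read with $n\ge 2$. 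The index-switching step you flagged is exactly where this hypothesis enters, and it cannot be circumvented.

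\textbf{The role of $\Upsilon$.} Poisson forms are horizontal by definition. Hence any closed summand $\Upsilon=\Upsilon^i\,d^{n-1}x_i$ satisfies $\partial\Upsilon^i/\partial y^a=\partial\Upsilon^i/\partial p^j_a=0$. It is therefore the pullback of a closed $(n-1)$-form on $M$, and already of type $\pr^*_{P,\Pi_P}\omega$. Your argument actually gives $F=\theta_X+\pr^*_{P,\Pi_P}\omega$ with $X$ and $\omega$ uniquely determined, not a decomposition that is unique only up to closed summands.

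\textbf{Globalization.} This also makes the global step immediate, with no patching needed. $F$ is affine along the fibres of the vector bundle $\Pi_P\to P$. Its fibrewise linear part is $\theta_X$ with $X^a=\partial F^i/\partial p^i_a$ (no sum over $i$). Its fibrewise constant part is $\pr^*_{P,\Pi_P}\omega$ with $\omega=z^*F$ for the zero section $z\colon P\to\Pi_P$. Both descriptions are intrinsic.
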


In order to study the reduced polysymplectic space we will repeatedly use the following result which follows from \cite[Ch. II, §3]{nomizu}.

\begin{lemma}\label{lemma: vector quotient}
Let $P\to M$ a $G$-principal bundle and let $V \to P$ be a vector bundle on which $G$ acts equivarantly, that is, the action commutes with the vector bundle projection. Then the map
\begin{align}
\psi \colon V &\longrightarrow V/G \times_M P, \nonumber \\ 
v_p &\longmapsto \bigl([v_p]_G,\, p\bigr),
\end{align}
is a vector bundle isomorphism over $P$ and a fiber diffeomorphism over $M$.
\end{lemma}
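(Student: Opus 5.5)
We have to show that $\psi$ is well defined, smooth, linear on each fibre over $P$, bijective, and has a smooth inverse. The plan is to build the inverse explicitly from the principal $G$-action, since free and transitive action on the fibres of $P\to M$ forces every $G$-orbit in $V$ to meet each fibre $V_p$ exactly once. We first record the standing facts. $G$ acts on $V$ by vector bundle automorphisms covering the principal action on $P$: $g\colon V_p\to V_{pg}$ is a linear isomorphism. This action is free, because it covers a free action. It is proper, because properness can be tested on the base $P$. Hence $V/G$ is a smooth manifold and $V\to V/G$ is a principal $G$-bundle. Moreover $V/G\to P/G=M$ is a vector bundle, with fibre over $x$ equal to $V_p$ for any $p\in P_x$, carrying the linear structure transported from $V_p$. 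This structure is independent of $p$ because the action is fibrewise linear. This is exactly the associated-bundle picture of \cite[Ch.~II, \S3]{nomizu}: $V\cong P\times_G$-type quotient.

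The map $\psi$ is clearly well defined and smooth. Its components are the quotient map $V\to V/G$ and the bundle projection $V\to P$, and both cover the same point $x\in M$, so $\psi$ lands in the fibered product. It covers the identity on $P$. On each fibre $V_p\to \{[v]_G : v\in (V/G)_x\}\times\{p\}$ it is the restriction of the quotient map, and this restriction is linear by the definition of the vector bundle structure on $V/G$ given above. So $\psi$ is a morphism of vector bundles over $P$.

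Bijectivity is checked fibrewise. For injectivity, suppose $[v_p]_G=[w_p]_G$ with both vectors over the same $p$. Then $w_p=g\cdot v_p$ for some $g$, so $pg=p$, which forces $g=e$ by freeness, and hence $w_p=v_p$. For surjectivity, take $([v_{p'}],p)$ with $p,p'\in P_x$. Choose the unique $g$ with $p=p'g$; then $g\cdot v_{p'}\in V_p$ maps to the given pair. The inverse is therefore $([v_{p'}],p)\mapsto \tau(p',p)\cdot v_{p'}$, where $\tau\colon P\times_M P\to G$ is the smooth division map defined by $p=p'\tau(p',p)$.

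The main point needing care is smoothness of this inverse. Its formula depends on a choice of representative $v_{p'}$, so we verify it through local sections. Over a trivializing open set $U\subset M$ with section $s$, every class in $(V/G)|_U$ has a unique representative in $V|_{s(U)}$. This gives a smooth local section of $V\to V/G$, and therefore a diffeomorphism $(V/G)|_U\cong V|_{s(U)}$. In these terms the inverse reads $(v_{s(x)},p)\mapsto \tau(s(x),p)\cdot v_{s(x)}$, which is a composition of the smooth division map $\tau$ and the smooth action. So $\psi$ is a diffeomorphism, and hence a vector bundle isomorphism over $P$ and a fibered diffeomorphism over $M$. An alternative is to note that $\psi$ is a bijective immersion between manifolds of equal dimension, $\dim V/G+\dim P-\dim M=\dim V$, and so is a diffeomorphism; but the explicit inverse is cleaner.
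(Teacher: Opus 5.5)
Your proof is correct, and since the paper proves this lemma only by citing the associated-bundle theory of Kobayashi--Nomizu, your direct verification is essentially the standard argument behind that citation: freeness and properness of the lifted action, the explicit inverse built from the division map $\tau\colon P\times_M P\to G$, and smoothness obtained from a local section $s$ that identifies $(V/G)|_U$ with $V|_{s(U)}$. You rightly make explicit that the action must be fibrewise linear, which the lemma's hypothesis leaves implicit; the one step you assert without justification is the smoothness of the local section $(V/G)|_U\to V|_{s(U)}$, and it holds because the section is induced by the smooth $G$-invariant map $V_p\ni v\mapsto v\cdot\tau(p,s(x))\in V_{s(x)}$ (for $p\in P_x$, $x\in U$) through the surjective submersion $V\to V/G$.
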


\section{Reduced polysymplectic bundle} \label{sec: reduced polysymplectic}

As previously mentioned, the main objective of this paper is to study Hamiltonian field theories on principal bundles whose symmetry group is a subgroup of the structure group. In this Section, we focus on the geometric properties of the reduced polysymplectic space $\Pi_{P} / K$ and explore its identification with $\Pi_{P}/G \times_{M} P/K$.

We will first introduce local coordinates which reflect the structure of the relevant bundles. Let $P\to M$ be a $G$-principal bundle, and $K\subset G$ a closed Lie subgroup. We denote $\mathfrak{g}$ and $\mathfrak{k}$ their respective Lie algebras. Let $\{\mathbf{B}_{\alpha}\}_{\alpha\in1,\dots,k}$ be a basis of $\mathfrak{k}$, and $\{\mathbf{B}_{\alpha}, \mathbf{B}_A\,\}_{\alpha\in1,\dots,k\,; A\in1,\dots,m-k}$ be an extension that forms basis of $\mathfrak{g}$, the structure constants are 
$$
\begin{aligned}
& \left[\mathbf{B}_{\alpha}, \mathbf{B}_{\beta}\right]=c_{\alpha \beta}^{\gamma} \mathbf{B}_{\gamma} \quad \\
& \left[\mathbf{B}_{\alpha}, \mathbf{B}_{A}\right]= c^{\gamma}_{\alpha A}  \mathbf{B}_{\gamma}+ c^C_{\alpha A} \mathbf{B}_{C} \\
& \left[\mathbf{B}_{A}, \mathbf{B}_{B}\right]=c_{A B}^{\gamma} \mathbf{B}_{\gamma}+c_{A B}^{C} \mathbf{B}_{C}
\end{aligned}
$$
where the first relation reflects that $[\mathfrak{k}, \mathfrak{k}] \subseteq \mathfrak{k}$.

Now consider $\left(x^{i}, y^{A}, k^{\alpha}\right)$ a normal coordinate system in a neighborhood of $p\in P$ in the sense that in the local trivialization, $p=(x,e)$ and given $(x,g)$ with coordinates $y^A(g),k^{\alpha}(g)$, then \begin{equation*}
    g=\exp(y^A(g)\mathbf{B}_A+k^{\alpha}(g)\mathbf{B}_{\alpha}).
\end{equation*}
Note that $\left(x^{i}, y^{A}\right)$ are also fiber coordinates adapted to $P/K\to M$. Furthermore, as described in Section \ref{sec: multi}, coordinates on $P$ induce local coordinates on the corresponding multimomentum bundle $\Pi_{P}$. In this context, the induced coordinates are $\left(x^{i}, y^{A}, k^{\alpha}, \pi_{A}^{i}, \pi_{\alpha}^{i}\right)$ and the Hamilton--Cartan equations are expressed as follows:
\begin{align*}
\frac{\partial H}{\partial \pi_{A}^{i}} &= \frac{\partial y^{A}}{\partial x^{i}} - \Lambda_{i}^{A} 
&\quad -\frac{\partial H}{\partial y^{A}} &= \frac{\partial p_{A}^{i}}{\partial x^{i}} + \frac{\partial \Lambda_{i}^{B}}{\partial y^{A}} \pi_{B}^{i} + \frac{\partial \Lambda_{i}^{\beta}}{\partial y^{A}} \pi_{\beta}^{i} \\
\frac{\partial H}{\partial \pi_{\alpha}^{i}} &= \frac{\partial k^{\alpha}}{\partial x^{i}} - \Lambda_{i}^{\alpha} 
&\quad -\frac{\partial H}{\partial k^{\alpha}} &= \frac{\partial p_{\alpha}^{i}}{\partial x^{i}} + \frac{\partial \Lambda_{i}^{B}}{\partial k^{\alpha}} \pi_{B}^{i} + \frac{\partial \Lambda_{i}^{\beta}}{\partial k^{\alpha}} \pi_{\beta}^{i},
\end{align*}
where $\Lambda_{i}^{A}\left(x^{j}, y^{\beta}, k^{\beta}\right)$, $\Lambda_{i}^{\alpha}\left(x^{j}, y^{\beta}, k^{\beta}\right)$ are the coefficients of a $G$-principal connection on $P \to M$.

We now study the reduced polysymplectic space $\Pi_{P} / K$. 

\begin{proposition} \label{prop: map}
    Let $P\to M$ be a $G$-principal bundle, and $K$ a closed Lie subgroup of $G$. Then the mapping     
\begin{align} \label{eq: map}
\varPsi \colon \Pi_{P} / K &\longrightarrow \Pi_{P}/G \times_{M} P/K \\
[\pi_x]_K &\longmapsto \left(\mu=[\pi_x]_G, \bar{s}=[\pr_{P,\Pi_P}(\pi_x)]_K\right) \nonumber
\end{align}
    is a fiber diffeomorphism.
\end{proposition}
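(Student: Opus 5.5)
The plan is to deduce the statement from Lemma \ref{lemma: vector quotient}, applied to the vector bundle $\Pi_P\to P$ with the $G$-action induced by the principal action on $P$, and then pass to the quotient by $K$. First I will check that the lifted action is well defined and equivariant. Right translation $R_g$ preserves $VP$, so its tangent map gives an action on $V^*P$ through the inverse transpose. This action acts trivially on the factors $TM$ and $\bigwedge\nolimits^nT^*M$, since they are pulled back from $M$. Hence $G$ acts on $\Pi_P$ by vector bundle automorphisms covering $R_g$, and the action is free and proper because the action on $P$ is. Lemma \ref{lemma: vector quotient} then provides a vector bundle isomorphism over $P$,
\[
\psi\colon \Pi_P\to \Pi_P/G\times_M P,\qquad \pi_y\mapsto([\pi_y]_G,\,y).
\]

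Next, I will restrict the action to $K$. Let $K$ act on $\Pi_P/G\times_M P$ trivially on the first factor and by the principal action on the second. Then $\psi$ is $K$-equivariant, because $[\pi_y\cdot k]_G=[\pi_y]_G$ and $\pr_{P,\Pi_P}(\pi_y\cdot k)=y\cdot k$. An equivariant diffeomorphism between manifolds with free proper $K$-actions descends to a diffeomorphism of the quotients. Since $K$ acts only on the second factor, the quotient of the target is
\[
(\Pi_P/G\times_M P)/K\;\cong\;\Pi_P/G\times_M P/K.
\]
To justify this identification I will use that $P\to P/K$ is a principal $K$-bundle, which holds because $K$ is closed. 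The map $\mathrm{id}\times(P\to P/K)$ is then a surjective submersion, and its fibers are exactly the $K$-orbits. Composing the two identifications yields precisely $\varPsi$. It commutes with the projections to $M$, so $\varPsi$ is a fiber diffeomorphism.

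The only delicate step is the smoothness of the quotients and of the induced inverse. Injectivity and surjectivity are immediate at the set level. For example, $([\pi]_G,[y]_K)$ is hit by $[\pi']_K$, where $\pi'$ is the unique representative of $[\pi]_G$ in the fiber over $y$; it exists and is unique by freeness and transitivity of $G$ on the fibers of $P$. For smoothness I will instead argue in local trivializations. Take $P|_U\cong U\times G$, so that $\Pi_P|_U\cong U\times G\times(\R^n\otimes\mathfrak g^*)$ using left-invariant trivialization of $V^*P$. In these coordinates $G$ acts by $(x,h,\xi)\mapsto(x,hg,\Ad^*_g\xi)$ and $\varPsi$ reads
\[
[x,h,\xi]_K\mapsto\bigl((x,\Ad^*_{h^{-1}}\xi),\,[x,h]_K\bigr),
\]
which, up to the sign convention for $\Ad^*$, has the evidently smooth inverse $((x,\eta),[x,h]_K)\mapsto[x,h,\Ad^*_h\eta]_K$. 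This inverse is well defined on $K$-classes and is smooth because $U\times G\to U\times G/K$ admits local sections.

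Checking this inverse and its well-definedness will be the main computation. Everything else follows directly from Lemma \ref{lemma: vector quotient}.
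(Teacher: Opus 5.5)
Your proposal is correct and follows the paper's proof. You apply Lemma~\ref{lemma: vector quotient} to obtain $\psi\colon\Pi_P\to\Pi_P/G\times_M P$, observe that $\psi$ is $K$-equivariant with $K$ acting trivially on $\Pi_P/G$, and pass to the quotient by $K$. The additional details you supply are what the paper leaves implicit: that the lifted action is by vector bundle automorphisms, and that $\Pi_P/G\times_M P\to\Pi_P/G\times_M P/K$ is a surjective submersion whose fibers are the $K$-orbits. Those details already give smoothness of $\varPsi^{-1}$, so your local-trivialization check is a harmless redundancy rather than a needed step.
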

\begin{proof}
    From Lemma \ref{lemma: vector quotient}, $\Pi_{P}$ and $\Pi_{P}/G \times_{M} P$ are diffeomorphic via $\psi(\pi_x)=(\mu,\pr_{P,\Pi_P}(\pi_x))$. As $K$ acts trivially on $\Pi_{P}/G$ considering the quotient by the action of $K$ concludes the proof.
\end{proof}
From \eqref{eq:intro-someremarks}, Proposition \ref{prop: map} provides the identification
$$\Pi_{P}/ K\cong \left(T M \otimes \g^{*} \otimes \bigwedge\nolimits^{n} T^*M\right)\times_{M} P/K.$$
To simplify notation, we will refer to both $\Pi_{P}/G$ and $T M \otimes \g^{*} \otimes \bigwedge\nolimits^{n} T^*M$ interchangeably, when no confusion arises. We denote by
$\Psi=\varPsi\circ \pr_{\Pi_P/K,\Pi_P}$, the projection from $\Pi_P$ to $\Pi_P/K\simeq\Pi_{P}/G \times_{M} P/K$. This decomposition is key to perform Lie--Poisson reduction and admits a geometric interpretation. Indeed, since sections of $P/K\to M$ are in one-to-one correspondence with reductions of the structure group of $P\to M$ from $G$ to $K$, the above splitting implies that studying $K$-invariant sections of $\Pi_P$ is equivalent to studying $G$-invariant sections together with reductions of $P\to M$ from $G$ to $K$.

In the coordinate system introduced above, the projection $\Psi$ takes the expression:
\begin{align} \label{eq: map local}
\Psi \colon \Pi_{P} \longrightarrow& \Pi_{P}/G \times_{M} P/K \\
\left(x^{i}, y^{A}, k^{\alpha}, \pi_{A}^{i}, \pi_{\alpha}^{i}\right) \longmapsto&\left(x^{i}, \mu_{A}^{i}=Z_{A}^{B}\left(g^{-1}\right) \pi_{B}^{i}+Z_{A}^{\beta}\left(g^{-1}\right) \pi_{\beta}^{i},
\right.\nonumber
\\ &
\left.\qquad\mu_{\alpha}^{i}=Z_{\alpha}^{B}\left(y^{-1}\right) \pi_{B}^{l}+Z_{\alpha}^{\beta}\left(g^{-1}\right) \pi_{\beta}^{i}, y^{A}\right),
 \nonumber
\end{align}
where functions $Z^I_J:U\subseteq P\to\mathfrak{g}$ are determined by
\begin{align}
& \operatorname{T}_g \operatorname{R}_{g^{-1}}\left(\frac{\partial}{\partial y^A}\right)_g=Z_A^B(g)\left(\frac{\partial}{\partial k^B}\right)_e+Z_A^\beta(g)\left(\frac{\partial}{\partial y^\beta}\right)_e, \\
& \operatorname{T}_g \operatorname{R}_{g^{-1}}\left(\frac{\partial}{\partial k^\alpha}\right)_g=Z_\alpha^B(g)\left(\frac{\partial}{\partial k^B}\right)_e+Z_\alpha^\beta(g)\left(\frac{\partial}{\partial y^\beta}\right)_e.
\end{align}
These functions were previously studied in the proof of \cite[Lemma 6.4]{someremarks}. There, a lengthy but standard argument involving Baker–Campbell–Hausdorff formula shows that
\begin{equation}\label{eq: derivative of Z}
    \frac{\partial Z^I_J}{\partial y^{K}}=-\frac{1}{2}c^{I}_{JK}.
\end{equation}

\begin{proposition} \label{prop: diff identification}
The local expression of $T\Psi:T\Pi_P \to T(\Pi_{P}/G \times_{M} P/K)$ at point $p=(s,e)$ is
\begin{align*}
T\Psi: T\Pi_P &\to T(\Pi_{P}/G \times_{M} P/K)\\
\frac{\partial}{\partial x^i}&\mapsto \frac{\partial}{\partial x^i} \\
\frac{\partial}{\partial y^A}&\mapsto \frac{1}{2}\mu^{j}_{\gamma}c^{\gamma}_{\beta A}\frac{\partial}{\partial \mu^j_{\beta}}+\frac{1}{2}\mu^{j}_{C}c^{C}_{\beta A}\frac{\partial}{\partial \mu^j_{\beta}}+ \frac{1}{2}\mu^{j}_{\gamma}c^{\gamma}_{BA}\frac{\partial}{\partial \mu^j_{B}}+\frac{1}{2}\mu^{j}_{C}c^{C}_{BA}\frac{\partial}{\partial \mu^j_{B}}+\frac{\partial}{\partial y^A} \\
\frac{\partial}{\partial k^{\alpha}}&\mapsto \frac{1}{2}\mu^{j}_{\gamma}c^{\gamma}_{\beta\alpha}\frac{\partial}{\partial \mu^j_{\beta}}+\frac{1}{2}\mu^{j}_{\gamma}c^{\gamma}_{B\alpha}\frac{\partial}{\partial \mu^j_{B}}+\frac{1}{2}\mu^{j}_{C}c^{C}_{B\alpha}\frac{\partial}{\partial \mu^j_{B}}\\
\frac{\partial}{\partial \pi^i_a}&\mapsto \frac{\partial}{\partial \mu^i_A}\\
\frac{\partial}{\partial \pi^i_{\alpha}}&\mapsto \frac{\partial}{\partial \mu^i_{\alpha}}\\
\end{align*}
\end{proposition}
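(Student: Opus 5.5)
The plan is to differentiate the local expression \eqref{eq: map local} of $\Psi$ directly and then evaluate at the identity fiber point $p=(s,e)$, where $y^A=k^\alpha=0$. At the identity the right translation is trivial, so
\[
Z^I_J(e)=\delta^I_J .
\]
In components, $\Psi$ sends $(x^i,y^A,k^\alpha,\pi^i_A,\pi^i_\alpha)$ to $x^i$, to $y^A$ (the coordinate on $P/K$), and to
\[
\mu^i_J=Z^I_J(g^{-1})\,\pi^i_I ,
\]
where $I$ runs over both index types $A$ and $\alpha$. The proof reduces to computing the partial derivatives of these component functions.

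\textbf{Trivial columns.} The coordinates $x^i$ and $y^A$ are copied over identically. So $\partial/\partial x^i\mapsto\partial/\partial x^i$ (the $Z$'s depend only on the fiber point $g$, not on $x$). Since the $y^A$ component of $\Psi$ is the identity, $\partial/\partial y^A$ picks up the summand $\partial/\partial y^A$, while $\partial/\partial k^\alpha$ has no $\partial/\partial y^A$ component. Because $\mu^i_J$ is linear in the momenta, $\partial\mu^j_J/\partial\pi^i_I=Z^I_J(g^{-1})$, which equals $\delta^I_J$ at $e$. This gives the last two lines of the statement.

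\textbf{Fiber columns.} I would differentiate $Z^I_J(g^{-1})$ with respect to $y^K$ and $k^\kappa$. In normal (exponential) coordinates, inversion is $g\mapsto g^{-1}$, i.e. $(y,k)\mapsto(-y,-k)$. By the chain rule and \eqref{eq: derivative of Z}, read uniformly over all basis indices of $\mathfrak g$, we get
\[
\partial_K\bigl[Z^I_J(g^{-1})\bigr]\Big|_e=\tfrac12 c^I_{JK}.
\]
Then, at $e$,
\[
\frac{\partial\mu^j_J}{\partial y^K}=\tfrac12 c^I_{JK}\,\pi^j_I=\tfrac12 c^I_{JK}\,\mu^j_I .
\]
The last equality uses $\mu=\pi$ at $e$. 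Splitting $I$ into $\gamma$- and $C$-type indices, and $J$ into $\beta$- and $B$-type, yields the four terms in the image of $\partial/\partial y^A$.

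For $\partial/\partial k^\alpha$ the same formula applies with $K=\alpha$. The term $c^C_{\beta\alpha}$ vanishes because $[\mathfrak k,\mathfrak k]\subseteq\mathfrak k$, which is exactly why only three terms survive.

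\textbf{Main obstacle.} The delicate point is justifying \eqref{eq: derivative of Z} for derivatives in every direction (both $y$ and $k$) and for every index pair, and getting the sign right after composing with inversion. This affects whether the coefficient is $+\tfrac12 c^I_{JK}$ or $-\tfrac12 c^I_{JK}$, and it also depends on the convention for which index of $c$ is contracted.

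To check this I would redo the first-order BCH expansion
\[
\exp(-X)\exp(X+tY)=\exp\bigl(tY-\tfrac t2[X,Y]+O(t^2,|X|^2)\bigr).
\]
This gives $Z(g)=\mathrm{id}-\tfrac12\mathrm{ad}_X+\dots$ for right-translation to the identity. Replacing $X$ by $-X$ (inversion) flips the sign, which produces the positive coefficient asserted in the proposition.
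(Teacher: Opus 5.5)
Your main line is the paper's own proof. You differentiate \eqref{eq: map local} at $e$, use $Z^I_J(e)=\delta^I_J$, pass through the inversion $X\mapsto -X$, apply \eqref{eq: derivative of Z} in all index types, and drop one term in the $k^\alpha$ column using $c^C_{\beta\alpha}=0$. Taking \eqref{eq: derivative of Z} as given, this correctly yields $\partial\mu^j_J/\partial y^K\big|_e=\tfrac12 c^I_{JK}\mu^j_I$, and hence the stated formulas.

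The BCH check you propose to settle the sign is wrong, and relying on it would give the opposite sign. The product $\exp(-X)\exp(X+tY)$ is \emph{left} translation by $g^{-1}$, whereas $Z$ is defined through $T_gR_{g^{-1}}$. The relevant expansion is
\[
\exp(X+tY)\exp(-X)=\exp\bigl(tY+\tfrac t2[X,Y]+\dots\bigr),
\]
so $Z(g)=\mathrm{id}+\tfrac12\mathrm{ad}_X+\dots$.

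Since $(\mathrm{ad}_X)^I_J=X^Kc^I_{KJ}=-X^Kc^I_{JK}$, this is exactly \eqref{eq: derivative of Z}. Your $Z(g)=\mathrm{id}-\tfrac12\mathrm{ad}_X$ would instead give $\partial Z^I_J/\partial y^K|_e=+\tfrac12 c^I_{JK}$, contradicting the formula used in your main line.

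After inversion, $Z(g^{-1})=\mathrm{id}-\tfrac12\mathrm{ad}_X+\dots$, with entries $\delta^I_J+\tfrac12 X^Kc^I_{JK}+\dots$. This is what produces the $+\tfrac12$ in the statement. Your version $Z(g^{-1})=\mathrm{id}+\tfrac12\mathrm{ad}_X$ would give $-\tfrac12 c^I_{JK}\mu^j_I$. You reach the stated sign only because you read a plus in front of $\mathrm{ad}_X$ as a plus in front of $c^I_{JK}$, so two slips cancel.

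A minor point inherited from the paper: in exponential coordinates of the first kind, the $y^A$ are not exactly $K$-invariant. The points $\exp(y^A\mathbf{B}_A)\exp(k^\alpha\mathbf{B}_\alpha)$ with $k$ varying lie in one $K$-orbit, yet have first-kind coordinate $y^C+\tfrac12 y^Ak^\alpha c^C_{A\alpha}+\dots$. So ``the $y^A$ component of $\Psi$ is the identity'' holds only up to second-order terms. This does not affect the differential at $e$, which is all you need.
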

\begin{proof}
    From the local expression \eqref{eq: map local} of $\Psi$, the result follows using the chain rule and \eqref{eq: derivative of Z} provided by \cite[Lemma 6.4]{someremarks}.
\end{proof}

\begin{lemma} \label{lemma: invariance}
Let $E:\Pi_P\to \R$ be a $K$-invariant real function on $\Pi_P$, then
\begin{equation}
    0=\frac{\partial E}{\partial k^{\alpha}}
    -\frac{1}{2}\pi^i_{\gamma}c^{\gamma}_{\beta\alpha}\frac{\partial E}{\partial \pi^i_\beta}
    -\frac{1}{2}\pi^i_{\gamma}c^{\gamma}_{B\alpha}\frac{\partial E}{\partial \pi^i_B}
    -\frac{1}{2}\pi^i_{C}c^{C}_{B\alpha}\frac{\partial E}{\partial \pi^i_B}.
\end{equation}
Similarly, let $D: P\to \mathfrak{g}$ be a $K$-equivariant function on $P$, then
\begin{align}
    0&=\frac{\partial D^{\gamma}}{\partial k^{\alpha}}-\frac{1}{2}c^{\gamma}_{\beta\alpha}D^{\beta}-\frac{1}{2}c^{\gamma}_{B\alpha}D^{B},\\
    0&=\frac{\partial D^{C}}{\partial k^{\alpha}}-\frac{1}{2}c^{C}_{\beta\alpha}D^{\beta}-\frac{1}{2}c^{C}_{B\alpha}D^{B},
\end{align}
\end{lemma}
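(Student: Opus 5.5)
The plan is to differentiate the invariance condition along the infinitesimal generators of the $K$-action and evaluate at the base point $p=(s,e)$ of the normal coordinate chart, as in Proposition \ref{prop: diff identification}. For $\xi=\mathbf{B}_\alpha\in\mathfrak{k}$, a $K$-invariant function $E$ on $\Pi_P$ satisfies $\xi_{\Pi_P}(E)=0$, where $\xi_{\Pi_P}$ is the fundamental vector field of the lifted right action of $K$ on $\Pi_P=TM\otimes V^*P\otimes\bigwedge\nolimits^nT^*M$. The proof therefore reduces to computing this fundamental vector field in the coordinates $(x^i,y^A,k^\alpha,\pi^i_A,\pi^i_\alpha)$ at points over $(x,e)$.

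\textbf{Step 1: the base component.} On $P$, the generator of $\exp(t\mathbf{B}_\alpha)$ acting by right multiplication, evaluated at $g=e$, is $\partial/\partial k^\alpha$ by the definition of normal coordinates. This contributes the term $\partial E/\partial k^\alpha$.

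\textbf{Step 2: the fiber component.} The action on $V^*P$ is the dual of the tangent lift. Using $VP\cong P\times\mathfrak{g}$, a covector $\pi$ at $g$ corresponds to $\mu=\pi\circ TR_g$ (more precisely its $\mathfrak{g}^*$-value), as in the local expression \eqref{eq: map local}, and $\mu$ is invariant under the action. Hence $\pi^i_J(g)=\mu^i_I\,(Z^{-1})^I_J(g)$. I would differentiate this along the curve $g_t=\exp(t\mathbf{B}_\alpha)$ at $t=0$, using \eqref{eq: derivative of Z} for the derivative of $Z$ with $Z(e)=\mathrm{id}$. This gives
\[
\dot\pi^i_J=-\tfrac12\,\pi^i_I c^I_{J\alpha}\quad\text{(up to sign conventions)},
\]
which reproduces exactly the coefficients appearing in Proposition \ref{prop: diff identification}, image of $\partial/\partial k^\alpha$. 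Writing out the index ranges with $J\in\{\beta,B\}$ and $I\in\{\gamma,C\}$, and using that $c^C_{\beta\alpha}=0$ because $\mathfrak{k}$ is a subalgebra, produces the three terms in the statement. Equivalently, and as the cleaner route, I would note that a $K$-invariant $E$ factors as $E=\bar E\circ\Psi$. Then $\partial E/\partial k^\alpha$ is computed by the chain rule from Proposition \ref{prop: diff identification}. Since $\bar E$ depends on $\mu$ only through $\partial E/\partial\pi=\partial\bar E/\partial\mu$ at $e$, and $\mu=\pi$ there, substituting gives the identity directly.

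\textbf{Step 3: the equivariant case.} For $D:P\to\mathfrak{g}$ with $D(gk)=\Ad_{k^{-1}}D(g)$, I would differentiate along $\exp(t\mathbf{B}_\alpha)$. The derivative $\partial D/\partial k^\alpha$ then equals $-[\mathbf{B}_\alpha,D]$ expressed in the basis, but the normal-coordinate correction from \eqref{eq: derivative of Z}, namely the factor $\tfrac12$ coming from the Baker–Campbell–Hausdorff expansion, again intervenes. I would handle this the same way: the pairing $\langle\pi,D\rangle$ of $D$ with $Z$-transported covectors is $K$-invariant, and applying the first identity yields the component equations for $D^\gamma$ and $D^C$.

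\textbf{Main obstacle.} The main difficulty is bookkeeping of signs, the factor $\tfrac12$, and the index positions in $c^I_{J\alpha}$. In particular, I expect some care is needed to confirm that the formula is meant at the base point $g=e$ of the normal chart, where $Z=\mathrm{id}$, and to match the convention of Proposition \ref{prop: diff identification}. I would check the final signs against the simplest case $K=G$ abelian, where all terms beyond $\partial E/\partial k^\alpha$ must vanish.
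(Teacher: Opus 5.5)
Your argument for the invariant function $E$ is the same as the paper's. You differentiate $E(x,0,0,\pi)=E\bigl(x,0,k^\lambda,Z^I_J(k)\pi_I\bigr)$ along $k(\varepsilon)=\exp(\varepsilon\mathbf{B}_\alpha)$ at the centre of the normal chart and use \eqref{eq: derivative of Z}. Your remark that $c^C_{\beta\alpha}=0$ is exactly why only three terms survive. There is one slip. With $Z$ defined through $TR_{g^{-1}}$, you have $\pi_J=\pi(\partial_J)=Z^I_J(g)\mu_I$. The inverse matrix you wrote would give $+\frac12$, so your final $-\frac12$ is correct but does not follow from the formula as written.

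The genuine problem is in Step 3. Under the hypothesis you adopt, $D(gk)=\Ad_{k^{-1}}D(g)$, no factor $\frac12$ appears. At the chart centre the curve $\exp(t\mathbf{B}_\alpha)$ is exactly the coordinate line $k^\alpha=t$, and the $D^I$ are components in the fixed basis of $\mathfrak{g}$. So $Z$ never enters, and you get $\partial D^I/\partial k^\alpha=c^I_{J\alpha}D^J$. For instance, $D(x,g)=\Ad_{g^{-1}}\xi$ gives $D(\exp t\mathbf{B}_\alpha)=\xi-t[\mathbf{B}_\alpha,\xi]+O(t^2)$. That is twice the value in the lemma whenever $[\mathbf{B}_\alpha,\xi]\neq 0$. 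So the second part cannot be proved for $\Ad$-equivariant $D$.

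The paper's proof instead uses the law $D^I(k)=Z^I_J(k^{-1})D^J(e)$. This is the law satisfied by the normal-frame components of a $K$-invariant vertical field $X=D^I\partial_I$: its right trivialisation $Z(g)D(g)$ is constant on $K$-orbits, and $Z(k)^{-1}=Z(k^{-1})$ to first order. The inversion $k\mapsto k^{-1}$ then turns $-\frac12$ into $+\frac12$.

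Your pairing fallback also fails under your reading of $D$. The function $\pi_I D^I$ is not $K$-invariant, because invariance would require $Z(k)=\Ad_k$, and these already differ at first order. The pairing $\pi(D_P)$ with the fundamental field is invariant, but it yields the identity for the coordinate components of $D_P$, not for $D$.

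With $D$ read as in the paper, however, your pairing is a neat alternative to the paper's second differentiation of $Z$. The function $E(\pi)=\pi_I D^I=\pi(X)$ is $K$-invariant and linear in $\pi$. Your first identity, with $\pi$ arbitrary, then gives at once $\partial D^I/\partial k^\alpha=\frac12 c^I_{J\alpha}D^J$. This is both displayed equations, since the $c^C_{\beta\alpha}D^\beta$ term vanishes.
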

\begin{proof}
Since $E$ is invariant under the action of $K$ on $\Pi_P$, for any $k\in K$,
\begin{align*}
    E\left(x^i, y^{A}, 0, \pi_{B}^{i}, \pi_{\beta}^{i}\right)=&E\left(k \cdot\left(\dot{x}^{i}, y^{A}, 0, \pi_{B}^{i}, \pi_{\beta}^{i}\right)\right)\\=&E(x^i,y^{A}, k^{\lambda}, Z^{\gamma}_{B}(k)\pi^i_{\gamma}+Z^{C}_{B}(k)\pi^i_{C},Z^{\gamma}_{\beta}(k)\pi^i_{\gamma}+Z^{C}_{\beta}(k)\pi^i_{C}).
\end{align*}
Now consider the one-parameter subgroup $k(\varepsilon)=\exp(\varepsilon B_{ \alpha})$ for which $k^{\alpha}=\varepsilon$, $k^{\beta}=0$ when $\beta\neq\alpha$. Then, 
$$E(x^i,y^A,0,\pi_{B}^{i}, \pi_{\beta}^{i})=E(x^i,y^A, k^{\lambda}(\varepsilon), p^i_a,Z^{\gamma}_{\beta}(g(\varepsilon))p^i_{\gamma}),$$
Differentiating both sides with respect to $\varepsilon$ at $\varepsilon=0$, and applying the identity \eqref{eq: derivative of Z}, we obtain the first part of the lemma. For the second part, the equivariance of $D$ implies that, for any $k\in K$, 
$$D^{\gamma}(x^i,y^A, k^{\lambda})=Z^{\gamma}_{\beta}(k^{-1})D^{\beta}(x^i,y^A,0)+Z^{\gamma}_{B}(k^{-1})D^{B}(x^i,y^A,0).$$
In particular, it holds for $k(\varepsilon)=\exp(\varepsilon B_{ \alpha})$ and derivation with respect to  $\varepsilon$ completes the proof.
\end{proof}

\section{Lie--Poisson Reduction} \label{sec: reduction}
This section contains the core results of the paper as it describes how to perform Lie--Poisson reduction in the multisymplectic and polysymplectic formalism when the group of symmetry is a subgroup of the structure group of a principal bundle.

\begin{proposition} \label{prop: reduced forms}
Let $f$ be a $(n-1)$-form on $\Pi_{P}/K$ such that $F=\Psi^*f$ is a $K$-invariant Poisson $(n-1)$-form on $\Pi_P$. Then
\begin{equation}\label{eq: n-1 red}
f=\theta_{\bar{\xi}}+\pr^*_{P/K,\Pi_P/K}\omega+\Upsilon,
\end{equation}
where $\bar{\xi}\in\Gamma (\pr_{M,P/K}^*\g\to P/K)$, $\omega$ is an horizontal $(n-1)$-form on $P/K$ and $\Upsilon$ is a closed horizontal $(n-1)$-form on $\Pi_P/K$.
\end{proposition}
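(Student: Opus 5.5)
We start from Proposition \ref{prop: n-1}: since $F=\Psi^*f$ is a Poisson $(n-1)$-form on $\Pi_P$, we can write $F=\theta_X+\pr^*_{P,\Pi_P}\omega_P+\Upsilon_P$. Here $X\in\Gamma(VP)$, $\omega_P$ is a horizontal $(n-1)$-form on $P$ and $\Upsilon_P$ is closed. The strategy is to show that each piece can be chosen $K$-invariant, and then that it descends through $\Psi$ to the corresponding term in \eqref{eq: n-1 red}.

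\textbf{Step 1: make the decomposition $K$-invariant.} In adapted coordinates $F$ is affine in the momenta. Its linear part is $\pi^i_a X^a$ (up to the $i_{\partial_{x^i}}d^nx$ factor), and it is determined by $F$ through restriction to fibers. Its part independent of the momenta is $\omega_P+\Upsilon_P|_{\text{zero section}}$ modulo closed pieces. The splitting into the linear-in-momenta part and the remainder is canonical, because the $K$-action is fiberwise linear. Hence the linear part $\theta_X$ is $K$-invariant, and this forces $X$ to be a $K$-invariant vertical vector field. The remainder $\pr^*\omega_P+\Upsilon_P$ is then also $K$-invariant. If necessary, and $K$ is compact, we average the pair $(\omega_P,\Upsilon_P)$ over $K$. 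For noncompact $K$ we instead argue directly: we take $\omega_P$ to be the restriction of the remainder to the zero section (a horizontal form on $P$, invariant because the remainder is), and let $\Upsilon_P$ be the difference. This difference is closed because $\theta_X+\pr^*\omega_P$ is again Poisson.

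\textbf{Step 2: descend $\theta_X$.} Using right trivialization, $VP\cong P\times_M \g$ canonically, so $X$ corresponds to a $G$-equivariant... more precisely, to a map $P\to\mathfrak g$ that is equivariant, i.e.\ a section $\xi$ of $\pr^*_{M,P}\g$. $K$-invariance of $X$ means that $\xi$ is constant along $K$-orbits as a section of the pulled-back adjoint bundle. By Lemma \ref{lemma: vector quotient}, applied with $K$ in place of $G$, it descends to a section $\bar\xi$ of $\pr^*_{M,P/K}\g\to P/K$. We then define $\theta_{\bar\xi}$ on $\Pi_P/G\times_M P/K$ by $(\theta_{\bar\xi})_{(\mu,\bar s)}=\langle\mu,\bar\xi(\bar s)\rangle$, with the $TM$ and $n$-form factors contracted as in Proposition \ref{prop: n-1}. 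We check $\Psi^*\theta_{\bar\xi}=\theta_X$ using $\mu=[\pi]_G$ and the definition of $\Psi$. In coordinates this is the statement $\mu^i_I\bar\xi^I=\pi^i_aX^a$ via the $Z$ functions, and Lemma \ref{lemma: invariance} (second part) confirms the required equivariance at the infinitesimal level.

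\textbf{Step 3: descend the other terms.} $\omega_P$ is $K$-invariant and horizontal over $M$, so it is $K$-basic for $P\to P/K$ and equals $\pr^*\omega$ for a horizontal form $\omega$ on $P/K$. Similarly $\Upsilon_P=F-\theta_X-\pr^*\omega_P=\Psi^*(f-\theta_{\bar\xi}-\pr^*\omega)$. We set $\Upsilon:=f-\theta_{\bar\xi}-\pr^*_{P/K,\Pi_P/K}\omega$. Since $\Psi$ is a surjective submersion, $d\Upsilon=0$ follows from $\Psi^*d\Upsilon=d\Upsilon_P=0$, and horizontality follows in the same way.

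\textbf{Main obstacle.} I expect the main obstacle to be Step 1: ensuring that the non-unique decomposition of Proposition \ref{prop: n-1} can be taken invariant without any compactness assumption. The fiberwise-affine argument handles this. The remaining care is needed in Step 2, namely identifying $K$-invariance of $X$ precisely with $\xi$ descending to $P/K$ as a section of $\pr^*_{M,P/K}\g$ rather than of $\kk$.
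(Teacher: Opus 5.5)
Your proposal is correct and follows essentially the paper's proof. You decompose $F$ via Proposition \ref{prop: n-1} into $K$-invariant pieces, descend $\omega_P$ and $\Upsilon_P$, and identify $VP/K$ with $\pr^*_{M,P/K}\g$; your route through $VP\cong\pr^*_{M,P}\g$, where $K$-invariance of $X$ becomes $\xi(pk)=\xi(p)$, is just the paper's map $\alpha\colon[p,\xi]_K\mapsto(\pr_{P/K,P}(p),[p,\xi]_G)$. What you add is an argument, via the canonical splitting of a fiberwise-affine form into its linear part and its zero-section part, that the decomposition can be chosen $K$-invariant; the paper simply asserts this.
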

\begin{proof}
From Proposition \ref{prop: n-1}, any $K$-invariant Poisson $(n-1)$-form on $\Pi_P$ can be written as
\begin{equation*}
F=\theta_X+\pr^*_{P,\Pi_P}\bar{\omega}+\bar{\Upsilon},
\end{equation*}
where $X$ is a $K$-invariant section of $VP\to P$, $\bar{\omega}$ is an horizontal $K$-invariant $(n-1)$-form on $P$ and $\bar{\Upsilon}$ is a closed horizontal $K$-invariant $(n-1)$-form on $\Pi_P$. Since both $\bar{\omega}$ and $\bar{\Upsilon}$ are horizontal and  $K$-invariant, they descend to forms on the quotient spaces: $\bar{\omega}$ projects to a horizontal $(n{-}1)$-form $\omega$ on $P/K$, and $\bar{\Upsilon}$ projects to a horizontal $(n{-}1)$-form $\Upsilon$ on $\Pi_P/K$. Similarly, the $K$-invariant section $X$ induces a section $\bar{\xi}$ of the quotient bundle $VP/K \to P/K $. As $VP=P\times\mathfrak{g}$, it suffices to show that $(P\times\mathfrak{g})/K$ and $\pr_{M,P/K}^*\g$ are isomorphic bundles. Indeed,
    \begin{align}
        \alpha: (P\times\mathfrak{g})/K &\to \pr_{M,P/K}^*\g \nonumber\\
        [p,\xi]_K &\mapsto (\pr_{P/K,P}(p),[p,\xi]_G)
    \end{align}
is clearly a smooth bundle map over the identity whose inverse can be defined as follows: Given  $(y,[p,\xi]_G)\in \pr_{M,P/K}^*\g$, we have that $y\in (P/K)_x$ and $p\in P_x$ for some $x\in M$, but not necessarily $\pr_{P/K, P}(\tilde{p})=y$. Let $\tilde{p} \in P_{x}$ such that $\pr_{P/K, P}(\tilde{p})=y$, there exists $g \in G$ such that $\tilde{p}=R_g(p)$. Then
$\left(y,[p, \xi]_{G}\right)=\left(y,\left[\tilde{p}, \Ad_{g}, \xi\right]_{G}\right)$, and
$$
\alpha^{-1}\left(y,[p, \xi]_{G}\right)=\left[\tilde{p},\Ad_{g} \xi\right]_{K}.
$$
The well posedness of this definition is a straightforward calculation.
\end{proof}

The local expression of a reduced Poisson $(n-1)$-form on $\Pi_P/K$ is $f=f^i \iota_{\partial /\partial x^i}\mathrm{v},$ where 
\begin{equation}
    f^{i}=\mu_{A}^{i} \xi^{A}\left(x^{i}, y^{A}\right)+\mu_{\alpha}^{i} \xi^{\alpha}\left(x^{i}, y^{A}\right)+\omega^{i}\left(x^{i}, y^{A}\right).
\end{equation}

To define the reduced covariant bracket, we begin by introducing some necessary notation. Let $s : U \to P$ be a compactly supported (local) section, and $\bar{s} : U \to P/K$ be the induced reduced section. We define the first-order differential operator
\begin{align} 
    \mathsf{P}_{\bar{s}} : \Gamma(\g) &\longrightarrow  \Gamma(\bar{s}^* V(P/K)) \nonumber \\
\eta &\to \mathsf{P}_{\bar{s}}(\eta) = \eta_{P/K} \big|_{\mathrm{Im}\,\bar{s}}, \label{eq: Ps}
\end{align}
where  $\eta_{P/K}$ denotes the infinitesimal generator of the $\mathfrak{g}$-action on $P/K$. The adjoint or dual operator $\mathsf{P}^{+}_{\bar{s}} : \Gamma(\bar{s}^* V^*(P/K)) \longrightarrow \Gamma(\g^*)$
is defined by
\begin{equation}\label{eq: Ps adjoint}
   \mathsf{P}^{+}_{\bar{s}}(\Upsilon)(\eta) = \langle \Upsilon, \eta_{P/K} \rangle 
\end{equation}
for all 
$\Upsilon \in \Gamma(\bar{s}^* V^*(P/K))$ and $\eta \in \Gamma(\tilde{\mathfrak{g}})$, where \(\langle \cdot, \cdot \rangle\) denotes the duality pairing. These maps were previously introduced in \cite{EPsubgroup} to describe the propagation of the set of admissible infinitesimal variations in Euler--Poincar\'e reduction. 

For a smooth function \(h \in C^\infty(\Pi_P/K)\), we define two vertical derivatives corresponding to the splitting of 
$\Pi_P/K
\;\cong\;
\bigl(TM\otimes\tilde{\mathfrak g}^*\otimes\Lambda^{n}T^*M\bigr)
\times_M (P/K).
$
The \emph{vertical derivative with respect to the momentum variable} is the fiber derivative
\[
\frac{\delta h}{\delta \mu}
\;:\;
\Pi_P/K
\longrightarrow
T^*M \otimes \tilde{\mathfrak g}\otimes \bigwedge\nolimits ^nTM,
\]
characterized by the property that, for any
\(\delta\mu \in TM\otimes\tilde{\mathfrak g}^*\otimes\wedge ^{n}T^*M\),
\[
\Big\langle \frac{\delta h}{\delta \mu}, \delta\mu\Big\rangle :=\left.\frac{d}{d\varepsilon}\right|_{\varepsilon=0}
h(\mu+\varepsilon\,\delta\mu,\bar s),
\]
As well as, the \emph{vertical derivative with respect to the configuration variable} which is the derivative along the fibers of \(P/K\to M\),
\[
\frac{\delta h}{\delta \bar{s}}
\;:\;
\Pi_P/K
\longrightarrow
V^*_{\bar{s}}(P/K),
\]
defined by
\[
\Big\langle \frac{\delta h}{\delta \bar{s}},\, X \Big\rangle:=\left.\frac{d}{d\varepsilon}\right|_{\varepsilon=0}
h(\mu,\exp(\varepsilon X)\cdot \bar s)
\qquad
X \in V_{\bar s}(P/K).
\]

\begin{definition} \label{def: Reduced Bracket}
    Let $f$ be a reduced Poisson $(n-1)$-form on $\Pi_P/K$ and $h$ a Hamiltonian density on $\Pi_{P}/ K$. We define their bracket as
\begin{equation} \label{eq: Reduced Bracket}
\{f,h\}=\{f,h\}_{\mathrm{LP}}+ \{f,h\}_{E},
\end{equation}
where $\{f,h\}_{LP}$ is the Lie-Poisson bracket on $T M \otimes \g^{*} \otimes \bigwedge\nolimits^{n} T^*M$
\begin{equation} \label{eq: Lie-Poisson bracket}
\{\bar{\xi},h\}_{\mathrm{LP}}=-\left\langle\mu,\left[\bar{\xi},\frac{\delta h}{\delta\mu}\right]\right\rangle
\end{equation}
and
\begin{equation} \label{eq: bracket on E}
\{f,h\}_{E}=\left\langle\frac{\delta f}{\delta\bar{s}},\mathsf{P}_{\bar{s}}\left(\frac{\delta h}{\delta\mu}\right)\right\rangle-\left\langle\frac{\delta h}{\delta\bar{s}},\mathsf{P}_{\bar{s}}(\bar{\xi})\right\rangle.
\end{equation}
\end{definition}

\begin{proposition} \label{prop: reduced bracket}
Let $\Psi:\Pi_{P}\rightarrow\Pi_{P}/K\cong(T M \otimes \g^{*} \otimes \bigwedge\nolimits^{n} T^*M)\times_MP/K$ be the projection in Proposition \ref{prop: diff identification}. Then for any $K$-invariant Poisson $(n-1)$-form $F$ and any $K$-invariant Poisson function $H$ on $\Pi_P/K$
\begin{equation}
\{F,H\}=\Psi^*\{f,h\},
\end{equation}
where the bracket on the left hand side is defined by \eqref{Poisson bracket}, $f$ is a reduced Poisson $(n-1)$-form on $\Pi_{P}/K$ such that $\Psi^*f=F$, $h$ is a function such that $\Psi^*h=H$, and the bracket on the right hand side is defined by \eqref{eq: Reduced Bracket}.
\end{proposition}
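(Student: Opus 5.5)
The plan is a local coordinate computation at a convenient point, using the normal coordinates $(x^i,y^A,k^\alpha,\pi^i_A,\pi^i_\alpha)$ centred at $p=(s,e)$. Both sides of $\{F,H\}=\Psi^*\{f,h\}$ are functions on $\Pi_P$. Both are $K$-invariant: the left side because the bracket \eqref{Poisson bracket} is natural under the lifted $K$-action, the right side by construction. We may translate any point of $\Pi_P$ by an element of $G$ in the local trivialization so that its base point is $(x,e)$. It therefore suffices to verify the identity at points with $y^A=0$, $k^\alpha=0$. At such points $Z^I_J(e)=\delta^I_J$, so $\mu=\pi$ there, and the derivatives of the $Z$'s are given by \eqref{eq: derivative of Z}.

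\emph{Step 1: the left-hand side.} By Proposition \ref{prop: n-1}, write $F=\theta_X+\pr^*\bar\omega+\bar\Upsilon$, with $X$ $K$-invariant. Closed forms contribute nothing to the bracket, so $\bar\Upsilon$ may be dropped. Computing $\chi_F$ from $i_{\chi_F}\Omega=dF$, the bracket of the $(n-1)$-form $F=F^i\,i_{\partial_i}\vol$ with the function $H$ is the standard expression
\[
\{F,H\}=\frac{\partial F^i}{\partial y^a}\frac{\partial H}{\partial \pi^i_a}-\frac{\partial H}{\partial y^a}\frac{\partial F^i}{\partial \pi^i_a},
\]
summed over $a\in\{A\}\cup\{\alpha\}$. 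Here $F^i=\pi^i_aX^a+\bar\omega^i$. We then substitute $F=\Psi^*f$ and $H=\Psi^*h$ and apply the chain rule through Proposition \ref{prop: diff identification}:
\begin{itemize}
\item $\partial/\partial\pi^i_a$ maps to $\partial/\partial\mu^i_a$;
\item $\partial/\partial y^A$ maps to $\partial/\partial y^A$ plus $\tfrac12$ times structure-constant terms in $\partial/\partial\mu$;
\item $\partial/\partial k^\alpha$ maps to pure $\tfrac12 c\,\mu\,\partial/\partial\mu$ terms.
\end{itemize}

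\emph{Step 2: sorting terms.} The resulting expression splits into two groups. The first group consists of terms with $\partial/\partial y^A$ acting on $\xi^a(x,y)$, $\omega^i(x,y)$ or on $h$ through the $P/K$-variable. These should reproduce $\{f,h\}_E$. At $\bar s=[e]$, the generator $\eta_{P/K}$ of $\eta=\eta^A\mathbf B_A+\eta^\alpha\mathbf B_\alpha$ is $\eta^A\partial/\partial y^A$ to first order. Hence $\langle\delta f/\delta\bar s,\mathsf P_{\bar s}(\delta h/\delta\mu)\rangle=\partial_{y^A}f^i\,\partial h/\partial\mu^i_A$, and similarly for the term with $\bar\xi$. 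The second group is quadratic, of the form $\tfrac12 c\,\mu\,(\partial f/\partial\mu)(\partial h/\partial\mu)$, and comes from both the $y^A$- and the $k^\alpha$-derivatives. The $k^\alpha$-derivatives are not available directly on $f,h$ but only on $F,H$. Here we use Lemma \ref{lemma: invariance}: it expresses $\partial H/\partial k^\alpha$ and $\partial X^a/\partial k^\alpha$ in terms of $\tfrac12 c\,\pi\,\partial_\pi$ contributions. Combining the $\tfrac12$'s from the $y^A$ and $k^\alpha$ directions, the quadratic terms should assemble into $-\mu^i_c\,c^c_{ab}\,\xi^a\,\partial h/\partial\mu^i_b$, which is exactly $-\langle\mu,[\bar\xi,\delta h/\delta\mu]\rangle=\{f,h\}_{LP}$.

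\emph{Main obstacle.} The expected difficulty is this last bookkeeping. One must check that the two halves, coming from the $y$-direction via \eqref{eq: derivative of Z} and from the $k$-direction via Lemma \ref{lemma: invariance}, add up rather than cancel. One must also check that the mixed index blocks ($\gamma$ versus $C$, using $c^C_{\alpha\beta}=0$) combine into a single full $\mathfrak g$-bracket. I would first do this for $H$ linear in $\pi$, where everything is a Lie bracket of vector fields on $P$, and then argue that only the fiber-derivative of $H$ at the point enters. Finally I would confirm the sign convention of $\{\,,\}_{LP}$ against the known case $K=G$ from \cite{someremarks}.
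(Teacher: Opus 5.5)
Your proposal is correct and follows the paper's proof essentially step for step: the same local formula for $\{F,H\}$, the chain rule through Proposition~\ref{prop: diff identification} with Lemma~\ref{lemma: invariance} supplying the $k^\alpha$-derivatives, and the same sorting of the $\partial/\partial y^A$ terms into $\{f,h\}_E$ and of the quadratic structure-constant terms into $\{f,h\}_{\mathrm{LP}}$. Two small corrections: first, the restriction to $y^A=k^\alpha=0$ comes from centring the normal coordinates at the given point (re-choosing the trivialization, as the paper does), not from $K$-invariance, since $F$ and $H$ are not $G$-invariant; second, with the paper's formula for $T\Psi$ the two halves do add, but to $+\mu^i_c c^c_{ab}\xi^a\,\partial h/\partial\mu^i_b$, which is the sign of the paper's own final expression \eqref{eq: local bracket} and opposite to the one written in \eqref{eq: Lie-Poisson bracket}, so the sign check you plan is genuinely needed.
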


\begin{proof}
    The local expression of $\left\{F,H\right\}$ in the coordinate system introduced in Section \ref{sec: reduced polysymplectic} is
    \begin{equation}
    \left\{F,H\right\}=\frac{\partial F^{i}}{\partial y^{A}} \cdot \frac{\partial H}{\partial \pi_{A}^{i}}-\frac{\partial F^{i}}{\partial \pi_{A}^{i}} \cdot \frac{\partial H}{\partial y^{A}}+\frac{\partial F^{i}}{\partial k^{\alpha}} \cdot \frac{\partial H}{\partial \pi_{\alpha}^{i}}-\frac{\partial F^{i}}{\partial \pi_{\alpha}^{i}} \cdot \frac{\partial H}{\partial k^{\alpha}}.
    \end{equation}
    Since $F$ and $H$ are $K$-invariant, from Proposition \ref{prop: diff identification} and Lemma \ref{lemma: invariance} the following function $r$ on $\Pi_{P}/K$ satisfies that $\{F,H\}=\Psi^*r$:
        \begin{align*}
          r=&\frac{1}{2} c_{\beta A}^\gamma \mu^j_\gamma \frac{\partial f^i}{\partial \mu_\beta^j} \frac{\partial h}{\partial \mu_A^i}
          +\frac{1}{2} c_{\beta A}^C \mu_C^j \frac{\partial f^i}{\partial \mu^j_\beta} \frac{\partial h}{\partial \mu_A^i}
          +\frac{1}{2} c_{B A}^\gamma \mu_{\gamma}^j \frac{\partial f^i}{\partial \mu_B^j} \cdot \frac{\partial h}{\partial \mu_A^i}+\frac{1}{2} c_{B A}^C \mu_C^j \frac{\partial f^i}{\partial \mu_B^j} \frac{\partial h}{\partial \mu_A^i}
          +\frac{\partial f^i}{\partial y^A} \frac{\partial h}{\partial \mu_A^i} \\
          &-\frac{\partial f^i}{\partial \mu_A^i}\left(
          +\frac{1}{2} c_{\beta A}^\gamma \mu_\gamma^j \frac{\partial h}{\partial \mu^j_\beta}
          +\frac{1}{2} c_{B A}^\gamma \mu^j_\gamma \frac{\partial h}{\partial \mu_B^j}
          +\frac{1}{2} c_{\beta A}^C \mu_C^j \frac{\partial h}{\partial \mu_\beta^j}
          +\frac{1}{2} c_{B A}^C \mu_C^j \frac{\partial h}{\partial \mu_B^j}\right)
          -\frac{\partial f^i}{\partial \mu_A^i} \frac{\partial h}{\partial y^A} \\
          & +\frac{1}{2} c_{\beta \alpha}^{\gamma} \mu^{j}_\gamma \frac{\partial f^{i}}{\partial \mu_{\beta}^j} \frac{\partial h}{\partial \mu_{\alpha}^{i}}
          +\frac{1}{2} c_{B \alpha}^{\gamma} \mu^{j}_\gamma \frac{\partial f^{i}}{\partial \mu_{B}^{j}} \frac{\partial h}{\partial \mu_{\alpha}^{i}}
          +\frac{1}{2} c_{B \alpha}^{C} \mu_{C}^{j} \frac{\partial f^{i}}{\partial \mu^j_{B}} \frac{\partial h}{\partial \mu_{\alpha}^{i}} \\
          & -\frac{\partial f^i}{\partial \mu_\alpha^i}\left(+\frac{1}{2} c_{\beta \alpha}^\gamma \mu^j_\gamma \frac{\partial h}{\partial \mu_\beta^j}
          +\frac{1}{2} c_{B \alpha}^\gamma \mu_\gamma^j \frac{\partial h}{\partial \mu_B^j}
          +\frac{1}{2} c_{B \alpha}^C \mu_C^j \frac{\partial h}{\partial \mu_B^j}\right)
        \end{align*}
    We shall see that $r=\{f,h\}$ as desired. As $f$ is a reduced Poisson form, from Proposition \ref{prop: reduced forms}
    \begin{equation*}
        \frac{\partial f^{i}}{\partial \mu^j_{\alpha}}=\xi^{\alpha} \delta_{j}^{i},
        \qquad
        \frac{\partial f^{i}}{\partial \mu^j_{A}}=\xi^{A} \delta_{j}^{i}.
    \end{equation*}
    Therefore,
    \begin{align} 
        r=& +c_{\beta A}^\gamma \mu_\gamma^{j} \frac{\partial f^i}{\partial \mu_\beta^j} \frac{\partial h}{\partial \mu^i_A}
        +c_{B A}^C \mu_C^{j} \frac{\partial f^i}{\partial \mu_\beta^{j}} \frac{\partial h}{\partial \mu^i_\alpha}
        +c_{B A}^\gamma \mu_\gamma^{j} \frac{\partial f^i}{\partial \mu_B^{j}} \frac{\partial h}{\partial \mu_A^{i}}
        +c_{B A}^C \mu^j_C\frac{\partial f^i}{\partial \mu^j_B} \frac{\partial h}{\partial \mu_A^{i}} \nonumber \\
        & +c_{\beta \alpha}^\gamma \mu_\gamma^{j} \frac{\partial f^i}{\partial \mu^j_\beta} \frac{\partial h}{\partial \mu_\alpha^i}
        +c_{B \alpha}^\gamma \mu_\gamma^j \frac{\partial f^i}{\partial \mu_B^{j}} \frac{\partial h}{\partial \mu^i_\alpha}
        +c_{B \alpha}^C \mu^j_C\frac{\partial f^i}{\partial \mu_B^{j}} \frac{\partial h}{\partial \mu_\alpha^{i}}
        +\frac{\partial f^i}{\partial y^A} \frac{\partial h}{\partial \mu_A^{i}}
        -\frac{\partial f^i}{\partial \mu_A^{i}} \frac{\partial h}{\partial y^A} \label{eq: local bracket}
    \end{align}
    which is the local expression of $\{f,h\}$.
\end{proof}


\begin{remark} \label{rmk: conn. red}
    Given a $K$- invariant Hamiltonian system $(\Pi_P,\Lambda,\mathcal{H})$ a connection on $\Pi_P/K\to M$ is naturally defined. Indeed, the principal connection $\Lambda$ on $P\to M$ induces a connection in the associated bundle $\g^*$. Combined with a linear connection $\Gamma$ on $M$, this yields a connection on $\Pi_P/G\simeq TM\otimes \g^*\otimes\bigwedge\nolimits^nT^*M$ (See Remark \ref{rmk: conn. poly}). This is precisely the connection used to describe dynamics in in \cite{someremarks}. Furthermore, $\Lambda$ induces a connection $\bar{\Lambda}$ in the associated bundle $P/K\to M$. Together, these two induced connections determine a connection on $\Pi_P/K\simeq\Pi_P/G\times_MP/K$.
\end{remark}

\begin{proposition} \label{prop: reduced dh}
    Let $F$ be a $K$-invariant Poisson $(n-1)$-form on $\Pi_P$, and let $f$ be a reduced Poisson $(n-1)$-form on $\Pi_P/K$ such that $\Psi^*f=F$. Then,
        \begin{equation}
            d^hF= \Psi^*\left(d^hf\right),
        \end{equation}
    where $d^hF$ and $d^hf$ are the horizontal differentials with respect to the connection on $\Pi_P$ introduced in \eqref{eq: conn.poly}, and the induced connection on $\Pi_P/K$ described in Remark \ref{rmk: conn. red}.
\end{proposition}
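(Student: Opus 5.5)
The cleanest route is to show that $\Psi$ intertwines the two connections, i.e.\ that $\Psi$ maps horizontal lifts to horizontal lifts. Write $\mathcal{h}$ for the horizontal lift of the connection \eqref{eq: conn.poly} on $\Pi_P\to M$ and $\bar{\mathcal{h}}$ for the lift of the connection on $\Pi_P/K\to M$ from Remark~\ref{rmk: conn. red}. The key claim is
\[
T\Psi\circ \mathcal{h}(v)=\bar{\mathcal{h}}(v)\circ\Psi,\qquad v\in TM .
\]
Granting this, the horizontal differential is built by evaluating the exterior differential on horizontal lifts (horizontal forms being determined by their values on these lifts). Since $F=\Psi^*f$ with $f$ horizontal, and $\Psi$ covers the identity on $M$, we get
\[
d^hF(v_0,\dots,v_{n-1})=dF(\mathcal{h}v_0,\dots)=df(T\Psi\,\mathcal{h}v_0,\dots)=df(\bar{\mathcal{h}}v_0,\dots)=\Psi^*(d^hf)(v_0,\dots).
\]
This gives the result.

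To prove the claim, I would argue structurally first. Factor $\Psi$ as the isomorphism $\psi\colon\Pi_P\to\Pi_P/G\times_M P$ of Lemma~\ref{lemma: vector quotient}, followed by the quotient $\mathrm{id}\times \pr_{P/K,P}$. The connection \eqref{eq: conn.poly} is $G$-invariant, because $\Lambda$ is principal and $\Gamma$ lives on $M$. Hence under $\psi$ it corresponds to the product of two connections: the one induced on $\Pi_P/G$ (as in \cite{someremarks}) and $\Lambda$ on $P$. The second factor must agree with the first component of the connection on $\Pi_P/K$ in Remark~\ref{rmk: conn. red}, and projecting $\Lambda$ by $P\to P/K$ yields by definition the associated connection $\bar\Lambda$ on $P/K$. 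Combining these gives $T\Psi\circ\mathcal{h}=\bar{\mathcal{h}}\circ\Psi$.

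As a check, I would verify the claim in coordinates at a point $(s,e)$, where Proposition~\ref{prop: diff identification} applies. Apply $T\Psi$ to \eqref{eq: conn.poly}, written in the coordinates $(x^i,y^A,k^\alpha,\pi^i_A,\pi^i_\alpha)$. The terms $\Lambda^A_i\,\partial_{y^A}+\Lambda^\alpha_i\,\partial_{k^\alpha}$ produce $\Lambda^A_i\,\partial_{y^A}$, which is $\bar\Lambda$ on $P/K$, together with $\tfrac12 c\,\mu$-terms in the $\partial_{\mu}$ directions. These terms must combine with the pushed-forward fiber terms $-\partial_{a}\Lambda^b_i\,\pi^j_b\,\partial_{\pi^j_a}$ to give the coadjoint action $\mathrm{ad}^*_{\Lambda_i}\mu$. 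For this I would use $\Lambda$ principal, so that $\Lambda^a_i(x,g)$ is determined by the Lie algebra valued coefficients at $e$ via right translation, together with \eqref{eq: derivative of Z}. The resulting expression should be the lift of the induced connection on $\g^*$, namely the covariant derivative $\nabla^{\Lambda}$ on $\g^*$, while the $\Gamma$-terms pass through unchanged.

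The main obstacle is this bookkeeping: matching the factors of $\tfrac12$ from the $\partial_y,\partial_k$ images with those from $\partial\Lambda/\partial y$ so that they sum to a single $\mathrm{ad}^*$ term. It is also why I prefer the structural $G$-invariance argument and use coordinates only as confirmation.
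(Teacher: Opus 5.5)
Your proposal is correct and follows essentially the same route as the paper, which also reduces everything to the intertwining identity $\Hor_{(\mu,\bar{s})}=T\Psi\circ\Hor_{\pi}$ and then runs the same chain $d^hf\circ(\mu,\bar{s})=df\circ T\Psi\circ\Hor_\pi=dF\circ\Hor_\pi=d^hF\circ\pi$. The paper justifies that identity by noting that $\Lambda$-horizontal subspaces project to those of $\bar{\Lambda}$ on $P/K$, and that the extension of $\Lambda$ to $VP$ projects to the adjoint connection on $VP/G\simeq\g$; your factorization of $\Psi$ through $\psi$ via $G$-invariance is a slightly more explicit version of the same justification (small slip: it is the \emph{first} factor, the connection on $\Pi_P/G$, that must match the first component of the connection in Remark~\ref{rmk: conn. red}).
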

\begin{proof}
Geometrically, the horizontal subspaces of $\bar{\Lambda}$ in Remark \ref{rmk: conn. red} are just the projection from $P$ to $P/K$ of the horizontal distribution defined by $\Lambda$. Similarly, the horizontal subspaces of the natural extension of $\Lambda$ to $VP$ projects to the horizontal subspaces of the associated adjoint connection on $VP/G\simeq\g$.
Thus, 
\begin{equation}
\Hor_{(\mu,\bar{s})}=T\Psi\circ \Hor_\pi
\end{equation}
where $\Hor_\bullet$ denotes the horizontal lift to $\pi$ and $(\mu,\bar{s})$ using the respective connections on $\Pi_P$ and $\Pi_P/K$. Given $\pi$ a section of $\Pi_P\to M$ and $(\mu,\bar{s})=\Psi\circ \pi$ section of $\Pi_p/K\to M$, we find that
$$
\begin{aligned}
d^{h} f \circ (\mu,\bar{s})&=df\circ \Hor_{(\mu,\bar{s})}  =d f\circ T\Psi \circ \Hor_{\pi}=d\left(f \circ \Psi\right) \circ \Hor_{\pi} \\
&=d F \circ \Hor_{\pi}=d^{h} F \circ \pi .
\end{aligned}
$$
\end{proof}


\begin{remark} \label{rmk: connection sigma}
    The bundle $\mathcal{C}=J^1P/G$ of $G$-principal connections on $P\to M$ is an affine bundle over $M$ modeled by the vector bundle $T^*M\otimes\g\to M$. Furthermore, given a section $(\mu,\bar{s})$ of $\Pi_P/K\to M$, the vertical derivative 
    $$\frac{\delta h}{\delta \mu}: \Pi_P/K \to T^*M\otimes\g$$
    can be pullbacked to a section of $T^*M\otimes\g\to M$. Thus, 
    $$\sigma=(\mu,\bar{s})^*\left(\frac{\delta h}{\delta \mu}\right)+\Lambda$$ is a $G$-principal connection on $P\to M$.
\end{remark}

\begin{theorem} \label{th: reduced dynamics}
Let $P\to M$ be a $G$-principal bundle over a manifold $M$ with a volume form $\vol$ and let $K$ be a closed subgroup of $G$. Let $\Lambda$  be a  $G$-principal connection on $P\to M$ and $\mathcal{H}=H\vol$ a $K$-invariant Hamiltonian density on $\Pi_P$. Denote by $\mathcal{h}=h\vol$ the reduced Hamiltonian density and for any section $\pi$ of $\Pi_P\to M$, let $(\mu,\bar{s})=\Psi \circ \pi$ be the reduced section of
$$\Pi_P/K\cong\left(TM\otimes\g^*\otimes\bigwedge\nolimits^{n-1}T^*M\right)\times_MP/K\to M.$$
Then, the following are equivalent:
\begin{enumerate}[\em (i)\em]
\item for every Poisson $(n-1)$-form $F$ on $\Pi_P$, the following identity holds true:
\begin{equation*}
\{F,H\}\vol=d(F\circ \pi)-d^{h}F\circ \pi.
\end{equation*}
\item the section $\pi:M\to\Pi_P$ satisfies the Hamilton--de Donder equations,
\item for every reduced Poisson $(n-1)$-form $f$ on $\Pi_P/K$,
\begin{equation}\label{intrinsic.red.eq}
\{f,h\}\vol=d(f\circ(\mu,\bar{s}))-d^{h}f\circ(\mu,\bar{s}),
\end{equation}
where the bracket is defined by Equation \eqref{eq: Reduced Bracket}.
\item the section $(\mu,\bar{s}):M\to\Pi_P/K$ satisfies the equations
\begin{equation}\label{eq: Lie-Poisson}
  \mathrm{div}^{\Lambda}\mu-\mathrm{ad}^*_{\delta h /\delta\mu}\mu+P^{+}_{\bar{s}}\left(\frac{\delta h}{\delta\bar{s}}\right)=0,
\end{equation}
\begin{equation}\label{eq: parallel}
  \nabla^{\sigma_K}\bar{s}=0,
\end{equation}
where $\sigma_K$ is the Ehresmann connection on $P/K\to M$ induced by $\sigma$, the connection introduced in Remark \ref{rmk: connection sigma}.
\end{enumerate}
\end{theorem}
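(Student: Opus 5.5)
The plan is to close the chain of equivalences (i)$\Leftrightarrow$(ii)$\Leftrightarrow$(iii)$\Leftrightarrow$(iv). The first equivalence is Theorem \ref{th: bracket formulation} applied to $\pi$; the only point to note is that the Hamilton--de Donder equations are the Hamilton--Cartan equations \eqref{Hamilton-Cartan}.

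For (i)$\Leftrightarrow$(iii) I will use the reduction results of Section \ref{sec: reduction}.

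\begin{itemize}
\item \emph{(i)$\Rightarrow$(iii).} Take a reduced Poisson form $f$ and set $F=\Psi^*f$, which is a $K$-invariant Poisson $(n-1)$-form. Proposition \ref{prop: reduced bracket} gives $\{F,H\}=\Psi^*\{f,h\}$, and Proposition \ref{prop: reduced dh} gives $d^hF=\Psi^*(d^hf)$. Since $F\circ\pi=f\circ(\mu,\bar s)$, identity (i) evaluated along $\pi$ becomes exactly \eqref{intrinsic.red.eq}.
\item \emph{(iii)$\Rightarrow$(i).} Here (iii) only tests against $K$-invariant $F$, so I argue via (ii) rather than (i) directly. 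By Proposition \ref{prop: n-1} and Proposition \ref{prop: reduced forms}, the reduced forms $\theta_{\bar\xi}+\pr^*\omega$ pull back to $\theta_X+\pr^*\bar\omega$ with $X$ and $\bar\omega$ ranging over $K$-invariant data. Locally, $K$-invariant vertical fields and $K$-invariant horizontal forms still span, pointwise over $P$, all directions $\partial/\partial y^A$, $\partial/\partial k^\alpha$ and all $\omega^i$. This is because the $K$-action on $P$ is free: one can extend arbitrary values on a local slice. Testing with these yields all of the local Hamilton--Cartan equations along $\pi$. The closed summand $\Upsilon$ contributes trivially on both sides, since for it the bracket term vanishes and $d(\Upsilon\circ\pi)=d^h\Upsilon\circ\pi$.
\end{itemize}

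For (iii)$\Leftrightarrow$(iv) I will test \eqref{intrinsic.red.eq} against the two generating families separately.

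\begin{itemize}
\item \emph{Testing with $f=\theta_{\bar\xi}$ for $\bar\xi$ depending only on $x$.} By Definition \ref{def: Reduced Bracket}, the left side is
\[
-\langle\mu,[\bar\xi,\delta h/\delta\mu]\rangle-\langle \delta h/\delta\bar s,\mathsf P_{\bar s}\bar\xi\rangle .
\]
On the right side, $d\langle\mu,\bar\xi\rangle-d^h\langle\mu,\bar\xi\rangle$ equals $\langle \mathrm{div}^\Lambda\mu,\bar\xi\rangle$, up to the convention for the adjoint connection, as in \cite{someremarks}. Requiring this for all $\bar\xi$ and dualizing with \eqref{eq: Ps adjoint} produces \eqref{eq: Lie-Poisson}.
\item \emph{Testing with $f=\pr^*\omega$ for $\omega$ on $P/K$.} The bracket reduces to $\langle \delta f/\delta\bar s,\mathsf P_{\bar s}(\delta h/\delta\mu)\rangle$. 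The right side is $d(\omega\circ\bar s)-d^{h}\omega\circ\bar s$, which measures $T\bar s-\bar\Lambda$ paired against the vertical derivative of $\omega$. Since $\omega$ is arbitrary, the equation says
\[
T\bar s-\bar\Lambda(\bar s)=\mathsf P_{\bar s}(\delta h/\delta\mu),
\]
that is, the covariant derivative of $\bar s$ with respect to $\bar\Lambda+(\delta h/\delta\mu)_{P/K}$ vanishes.
\end{itemize}

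The main obstacle is this last identification: showing that the connection on $P/K$ induced by $\sigma=\Lambda+(\mu,\bar s)^*(\delta h/\delta\mu)$ has horizontal lift $\bar\Lambda+\mathsf P_{\bar s}(\delta h/\delta\mu)$, so that the condition is exactly \eqref{eq: parallel}. I would check it in the normal coordinates of Section \ref{sec: reduced polysymplectic}, using the local bracket \eqref{eq: local bracket}. In those coordinates the components $\partial f^i/\partial y^A\,\partial h/\partial\mu^i_A$ are the $P/K$-components of the infinitesimal generator of $\delta h/\delta\mu$. One can arrange the normal coordinates at a point where the structure-constant terms vanish, which handles pointwise equality.

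The converse direction, (iv)$\Rightarrow$(iii), is linear in $f$. It therefore follows by the decomposition of Proposition \ref{prop: reduced forms} together with $C^\infty(M)$-linearity of the bracket in $\bar\xi$ and $\omega$ up to terms matched by the Leibniz rule of $d$.
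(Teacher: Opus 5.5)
\textbf{Gap in (iii)$\Rightarrow$(i).} It is true that $K$-invariant vertical fields and horizontal forms can take arbitrary \emph{values} at a point. But the Hamilton--Cartan equations are not read off from values.

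Write $F=\theta_X+\pr^*\omega$, so $F^i=\pi^i_bX^b+\omega^i$. Let the fiber index $a$ run over both $A$ and $\alpha$ (with $y^\alpha:=k^\alpha$), and put $\Delta^a_i=\partial H/\partial\pi^i_a-\partial_iy^a+\Lambda^a_i$. Then the identity in (i) becomes
$(\pi^i_b\,\partial_aX^b+\partial_a\omega^i)\Delta^a_i-X^aD_a=0$,
where $D_a$ is the defect of the second Hamilton--Cartan equation. So the first equation is only reachable through the \emph{first derivatives} of $X$ and $\omega$.

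For $K$-invariant data, the derivatives along the orbits are not free. At $(x,e)$ one has $\partial\omega^i/\partial k^\alpha=0$, and $\partial X^b/\partial k^\alpha$ is a fixed linear function of $X$ (cf.\ Lemma~\ref{lemma: invariance}). Testing with all $K$-invariant $F$ therefore gives exactly $\Delta^A_i=0$ and $D_a=(\pi\text{-linear combination of }\Delta^\alpha_i)$. The $\mathfrak{k}$-components $\Delta^\alpha_i$ are never isolated.

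The paper's proof does not supply this step either. Its (i)$\Leftrightarrow$(iii) only treats $F=\Psi^*f$ via Propositions~\ref{prop: reduced bracket} and \ref{prop: reduced dh}. Its coordinate computation only yields $\partial h/\partial\mu^i_A=\partial_iy^A-\Lambda^A_i$.

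\textbf{The gap cannot be filled.} The implication (iii)/(iv)$\Rightarrow$(ii) is false for arbitrary $\pi$ once $\dim K>0$. Let $\pi$ solve the Hamilton--de Donder equations and let $k\colon M\to K$ be nonconstant.
\begin{itemize}
\item $\pi\cdot k$ has the same reduction $(\mu,\bar s)$, so it satisfies (iii) and (iv).
\item However, $\nabla^\Lambda(s\cdot k)=TR_k(\nabla^\Lambda s)+(k^{-1}dk)^*$.
\item By $K$-invariance, the fiber derivative of $H$ is merely transported by $TR_k$.
\item Hence the first Hamilton--Cartan equation for $\pi\cdot k$ fails by the fundamental field of $k^{-1}dk$.
\end{itemize}
For the heavy top, $R(t)\mapsto R(t)k(t)$ with $k(t)\in SO(2)$ leaves $\mu=\pi R^{-1}$ and $\Gamma=R\mathbf{e}_3$ unchanged, but adds to $\dot RR^{-1}$ a rotation about $\Gamma$.

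\textbf{What is provable.} One can establish (i)$\Leftrightarrow$(ii)$\Rightarrow$(iii)$\Leftrightarrow$(iv). Your arguments for these directions are fine. They are an intrinsic version of the paper's grouping of the coefficients of $\partial f^i/\partial y^A$ and $\partial f^i/\partial\mu^j_a$. The converse needs the extra hypothesis $\nabla^\sigma s=0$ for $s=\pr_{P,\Pi_P}\circ\pi$; given (iv), this is equivalent to $\Delta^\alpha_i=0$. This is exactly what Theorem~\ref{th: reconstruction} arranges by choosing $s$ $\sigma$-parallel, and it is where $\Curv(\sigma)=0$ enters.
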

\begin{proof}
    The equivalence (i)$\Leftrightarrow$(ii) is established in Theorem \ref{th: bracket formulation}. We now proceed to prove the equivalence (i)$\Leftrightarrow$(iii). From Propositions \ref{prop: reduced bracket} and \ref{prop: reduced dh},  it suffices to show that $d(F\circ \pi)=\Psi^*(d(f\circ(\mu\oplus\bar{s})))$. Indeed, since $F$ is horizontal, for any $v_1,\dots v_{n-1}$ vectors in $T_xM$
        \begin{align*}
        (\pi^*F)_x(v_1,\dots,v_{n-1})&=F_{\pi(x)}(T_x\pi(v_1),\dots,T_x\pi(v_{n-1}))=F_{\pi(x)}(v_1,\dots,v_{n-1})\\&=(F\circ \pi)_x(v_1,\dots,v_{n-1}).
        \end{align*}
    Similarly, $(\mu,\bar{s})^*f=f\circ(\mu,\bar{s})$. Thus,
    $$F\circ \pi=\pi^*F=\pi^*\Psi^*f=(\mu,\bar{s})^*f=f\circ(\mu,\bar{s}),$$
    and, as required, $d(F\circ p)=d(f\circ(\mu,\bar{s}))$.
    Finally, equivalence (iii)$\Leftrightarrow$(iv) is obtained as follows:
    The local expressions 
    \begin{equation} \label{eq: local df}
        d\left( f\circ(\mu, \bar{s})\right)=\frac{\partial f^{i}}{\partial x^{i}}+\frac{\partial f^{i}}{\partial y^{A}} \frac{\partial y^{A}}{\partial x^{i}}+\frac{\partial f^{i}}{\partial \mu^j_{A}} \frac{\partial \mu^j_{A}}{\partial x^{i}}+\frac{\partial f^{i}}{\partial \mu^j_{\alpha}} \frac{\partial \mu^j_{\alpha}}{\partial x^{i}},
    \end{equation}
    \begin{align*} \label{eq: local dhf}
        d^{h}f\circ(\mu,\bar{s})=&\frac{\partial f^i}{\partial x^i}
    +\frac{\partial f^i}{\partial y^A} \Lambda_i^A +\frac{\partial f^i}{\partial \mu_A^j}\left(
    -\left(\frac{\partial \Lambda_i^B}{\partial y^A}-\frac{1}{2} C_{A C}^B \Lambda_i^C-\frac{1}{2}C^B_{A\gamma} \Lambda_i^\gamma\right) \mu_B^j\right) \nonumber \\&
    +\frac{\partial f^i}{\partial \mu_A^j}\left(
    -\left(\frac{\partial \Lambda_i^\beta}{\partial y^A}-\frac{1}{2} C_{A C}^\beta \Lambda_i^C-\frac{1}{2} C_{A \gamma}^\beta \Lambda_i^\gamma\right) \mu^j_\beta
    +\Gamma_{i k}^j \mu_A^k-\Gamma_{i k}^k \mu_\alpha^j\right) \nonumber \\
    & +\frac{\partial f^{i}}{\partial \mu_{\alpha}^{j}}\left(
    -C_{C \alpha}^{B} \Lambda_{i}^{C} \mu_{\beta}^{j}
    -C_{\gamma\alpha}^{\beta} \Lambda_{i}^{\gamma} \mu_{\beta}^{j}
    -C_{C \alpha}^{\beta} \Lambda_{i}^{C} \mu_{\beta}^{j}
    +\Gamma_{i k}^{j} \mu_{\alpha}^{k}
    -\Gamma_{i k}^{k} \mu_{\alpha}^{j}\right),
    \end{align*}
together with equation \eqref{eq: local bracket} provide a local description of \eqref{intrinsic.red.eq}. Note that, as    
    \begin{equation*}
        \frac{\partial f^{i}}{\partial \mu^j_{\alpha}}=\xi^{\alpha} \delta_{j}^{i},
        \qquad
        \frac{\partial f^{i}}{\partial \mu^j_{A}}=\xi^{A} \delta_{j}^{i}.
    \end{equation*}
terms involving Christoffel symbols $\Gamma$ become irrelevant. As equation \eqref{intrinsic.red.eq} is true for any reduced Poisson $(n-1)$-form, we can group terms on $\frac{\partial f^{i}}{\partial y^{A}}$ and obtain equation;
    \begin{equation} \label{eq: parallel local}
        \frac{\partial h}{\partial \mu_{A}^i}=\frac{\partial y^{A}}{\partial x^{i}}-\Lambda_{i}^{A}
    \end{equation}
which is the local expression of Equation \eqref{eq: parallel}. Similarly, grouping terms on  $\frac{\partial f^{i}}{\partial \mu_{A}^{i}}$ and $\frac{\partial f^{i}}{\partial \mu_{\alpha}^{i}}$,

\begin{multline}\label{eq: Lie-Poisson local A}
\mu_{\gamma}^{i} C_{A B}^{\gamma} \frac{\partial h}{\partial \mu_{B}^{i}}
+\mu_{C}^{i} C_{A B}^{C} \frac{\partial h}{\partial \mu_{B}^i}
+\mu_{\gamma}^{i} C_{A \beta}^{\gamma} \frac{\partial h}{\partial \mu_{\beta}^{i}}
+\mu_{C}^{i} C_{A \beta}^{C} \frac{\partial h}{\partial \mu_{\beta}^i}
-\frac{\partial h}{\partial y^{A}}
\\
= \frac{\partial \mu_{A}^{i}}{\partial x^{i}}
+\left(\frac{\partial \Lambda_{i}^{B}}{\partial y^{A}}
-\frac{1}{2} C_{A C}^{B} \Lambda_{i}^{C}
-\frac{1}{2} C_{A \gamma}^{B} \Lambda_{i}^{\gamma}\right) \mu^i_{B}
+\left(\frac{\partial \Lambda_{i}^{\beta}}{\partial y^{A}}
-\frac{1}{2} C_{A C}^{\beta} \Lambda_{i}^{C}
-\frac{1}{2} C_{A \gamma}^{\beta} \Lambda_{i}^{\gamma}\right) \mu^i_\beta,
\end{multline}
    \begin{equation} \label{eq: Lie-Poisson local alfa}
    \mu_{\gamma}^{i} C_{\alpha B}^{\gamma} \frac{\partial h}{\partial \mu_{B}^i}
    + \mu_{\gamma}^{i} c_{\alpha \beta}^{\gamma} \frac{\partial h}{\partial \mu_{\beta}^i}
    + \mu_{\gamma}^{i} c_{\alpha B}^{\gamma} \frac{\partial h}{\partial \mu_{B}^i} 
    = \frac{\partial \mu_{\alpha}^{i}}{\partial x^{i}} \\
    + C_{C \alpha}^{B} \Lambda_{i}^{C} \mu_{B}^{i}
    + C_{\gamma \alpha}^{\beta} \Lambda_{i}^{\gamma} \mu^i_{\beta}
    + C^{\beta}_{C \alpha} \Lambda_{i}^{C} \mu_{\beta}^i,
    \end{equation}
which are the local expression of Equation \eqref{eq: Lie-Poisson}.
\end{proof}

\section{Reconstruction} \label{sec: reconstruction}

We now study the link between solutions of the reduced Hamiltonian system and those of the original problem. Theorem \ref{th: reduced dynamics} shows that a solution $\pi(x)$ of a Hamiltonian system with group of symmetry $K$ can be projected to a pair $(\mu(x),\bar{s}(x))$ satisfying the reduced equations. A natural question then arises: under what conditions can one reconstruct a solution of the original system from a given reduced solution $(\mu(x), \bar{s}(x))$? We will see that obstruction to reconstruction is determined by the flatness of the connection introduced in Remark \ref{rmk: connection sigma}.


\begin{theorem} \label{th: reconstruction}
Let $P \to M$ be a principal $G$-fiber bundle over an oriented simply-connected manifold $M$ with a volume form $\vol$, and let $K$ be a closed subgroup of $G$. Let $\mathcal{H}=H\vol$, where $H : \Pi_P \to \R$, be a $K$-invariant Hamiltonian, and let $\Lambda$ be a $G$-principal connection on $P\to M$. A solution $(\mu, \bar{s})$ of the Lie--Poisson equations \eqref{eq: Lie-Poisson} and \eqref{eq: parallel} is the reduction of a solution $\pi:M\to \Pi_P$ of the original Hamiltonian system defined by $(\Pi_P,\Lambda,\mathcal{H})$, if and only if 
$$\sigma=(\mu,\bar{s})^*\left(\frac{\partial h}{\partial \mu}\right)+\Lambda$$ is a flat connection with trivial holonomy.

\medskip
For non-simply-connected manifolds, as the holonomy of any flat connection is locally trivial, we always have the local equivalence
\[
\left.
\begin{array}{r}
 \pi \text{ satisfies the} \\
 \text{Hamilton-de Donder} \\
 \text{equations of } (\Pi_P, \Lambda, \mathcal{H}) 
\end{array}
\right\}
\Longleftrightarrow 
\left\{
\begin{aligned}
    &\mathrm{div}^{\Lambda}\mu 
    - \mathrm{ad}^*_{\delta h / \delta \mu} \mu 
    + P^{+}_{\bar{s}}\left( \frac{\delta h}{\delta \bar{s}} \right) = 0, \\
    &\nabla^\sigma \bar{s} = 0, \\
    &\Curv(\sigma) = 0.
\end{aligned}
\right.
\]
\end{theorem}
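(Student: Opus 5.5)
The plan is to reduce the reconstruction problem to finding a section $s:M\to P$ with two properties: it projects onto $\bar s$, and it is horizontal for $\sigma$. Once such an $s$ exists, the momentum is recovered from $\mu$ through Lemma \ref{lemma: vector quotient}, namely $\pi=\psi^{-1}(\mu,s)$. By Proposition \ref{prop: map} we then have $\Psi\circ\pi=(\mu,\bar s)$.

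\emph{Necessity.} Assume $\pi$ solves the Hamilton--de Donder equations and put $s=\pr_{P,\Pi_P}\circ\pi$. The first group of Hamilton--Cartan equations reads $\partial H/\partial \pi^i_a=\partial y^a/\partial x^i-\Lambda^a_i$. Since $H=h\circ\Psi$ and $T\Psi(\partial/\partial\pi^i_a)=\partial/\partial\mu^i_a$ (Proposition \ref{prop: diff identification}), this says that $ds-\Lambda(ds)$ coincides with the image of $\delta h/\delta\mu$ under the identification $VP\cong P\times\mathfrak g$. In other words, the vertical part of $ds$ with respect to $\sigma$ vanishes, so $s$ is $\sigma$-horizontal. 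A connection that admits a global horizontal section has horizontal distribution integrable, with leaves of the form $R_g(s(M))$. Hence $\sigma$ is flat with trivial holonomy.

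\emph{Sufficiency.} Assume $\sigma$ is flat with trivial holonomy; on simply connected $M$, flatness alone suffices. Then every point of $P$ lies on a global $\sigma$-horizontal section. The key step is to choose such a section inside the reduction $Q=\{p\in P:\ [p]_K=\bar s(\pi_{M,P}(p))\}$, which is a principal $K$-subbundle. Equation \eqref{eq: parallel} says that $\bar s$ is parallel for $\sigma_K$. Because $\sigma_K$ is obtained by projecting the horizontal distribution of $\sigma$ along $P\to P/K$, this means $\sigma$ is reducible to $Q$: the horizontal lift of $\dot x$ at a point $p\in Q$ projects to $T\bar s(\dot x)$, and is therefore tangent to $Q$. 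So the horizontal leaf through a point of $Q$ stays in $Q$, and this leaf is a section $s$ with $[s]_K=\bar s$.

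Define $\pi=\psi^{-1}(\mu,s)$. By construction $\Psi\circ\pi=(\mu,\bar s)$, and horizontality of $s$ gives the first half of the Hamilton--Cartan equations, by the same computation as above read backwards. The remaining equations, those for $\partial\pi^i_a/\partial x^i$, need separate treatment. I would obtain them by rerunning the proof of Theorem \ref{th: reduced dynamics}, (iii)$\Rightarrow$(i), in the opposite direction. Propositions \ref{prop: reduced bracket} and \ref{prop: reduced dh} turn \eqref{intrinsic.red.eq} for $(\mu,\bar s)$ into the bracket identity of Theorem \ref{th: bracket formulation} for $K$-invariant Poisson forms $F$ along $\pi$. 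By Proposition \ref{prop: n-1}, these forms have the shape $\theta_X+\pr^*\omega+\Upsilon$ with $X$ a $K$-invariant vertical field. $K$-invariant vertical fields still span $V_pP$ pointwise, since $VP\cong P\times\mathfrak g$ and any value at one point extends $K$-equivariantly. As the Hamilton--Cartan equations are pointwise and linear in the test field, testing against $K$-invariant forms is therefore enough.

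\emph{Main obstacle.} The delicate step, which I expect to be the hardest, is the claim that parallelism of $\bar s$ for $\sigma_K$ is exactly reducibility of $\sigma$ to $Q$. I would check it in the normal coordinates $(x^i,y^A,k^\alpha)$ at points of $Q$, where it reduces to \eqref{eq: parallel local}. The local statement for non-simply-connected $M$ then follows by applying the global argument on simply connected open sets, where flat connections have trivial holonomy.
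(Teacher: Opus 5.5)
Your argument is correct and has the same overall structure as the paper's: a $\sigma$-horizontal section forces flatness, and conversely one builds a $\sigma$-horizontal $s$ over $\bar s$ and sets $\pi=\psi^{-1}(\mu,s)$. It departs from the paper at two points.

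\emph{Placing $s$ over $\bar s$.} The paper takes the leaf through a point $\hat s(x_0)g$ lying over $\bar s(x_0)$. It notes that the projection of this leaf to $P/K$ is $\sigma_K$-parallel, hence equal to $\bar s$ by uniqueness of parallel sections with a given initial value. You instead reduce $\sigma$ to the $K$-subbundle $Q$. The two are equivalent, and the step you call the main obstacle needs no coordinates. The $\sigma_K$-horizontal spaces are the projections of the $\sigma$-horizontal ones, so parallelism of $\bar s$ says that the $\sigma$-horizontal lift at $p\in Q$ projects into $T\bar s(M)$. Since $Q$ is the preimage of $\bar s(M)$ under the submersion $P\to P/K$, that lift is tangent to $Q$.

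\emph{The final verification.} This is the more substantive difference. The paper concludes by quoting Theorem \ref{th: reduced dynamics}. As written, that step uses only $\Psi\circ\pi=(\mu,\bar s)$ and never the horizontality of $s$. It would therefore apply equally to $R_{k(\cdot)}\circ\pi$ for a non-constant map $k\colon M\to K$, which has the same projection but is in general not a solution; compare the remark following the theorem. The reason is that the proof of Theorem \ref{th: reduced dynamics} only yields the bracket identity for $K$-invariant forms $F=\Psi^*f$. These forms cannot isolate the components of the first group of Hamilton--Cartan equations along the $K$-orbits, and those components are exactly what fixes the $K$-gauge of $s$ and what flatness makes solvable. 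Your split supplies precisely what is missing: the first group comes from $\sigma$-horizontality of $s$, and only the second group from $K$-invariant test forms.

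\emph{What to make explicit.} When you write this up, state why the test becomes pointwise, since this fails before the first group is known. In the local form of the identity for $F=\theta_X$:
\begin{itemize}
\item the derivatives $\partial X^a/\partial y^b$ occur only multiplied by $\pi^i_a\bigl(\partial H/\partial\pi^i_b-\partial y^b/\partial x^i+\Lambda^b_i\bigr)$;
\item the undifferentiated $X^a$ multiplies the second-group residual $\partial\pi^i_a/\partial x^i+(\partial\Lambda^b_i/\partial y^a)\pi^i_b+\partial H/\partial y^a$.
\end{itemize}
For $K$-invariant $X$, the derivatives along the $K$-orbits are tied to the values of $X$. So only after horizontality of $s$ has killed the first factor does the identity become linear and pointwise in $X$. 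At that point, the fact that $K$-invariant vertical fields span $V_pP$ gives the second group.
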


\begin{proof}
Suppose that $\pi$ is a section of $\Pi_P\to M$ which is a solution of $(\Pi_P,\Lambda,\mathcal{H})$. Consider $s(x)=\pr_{P,\Pi_P}\pi(x)$ a section of $P\to M$ and $(\mu,\bar{s})=\Psi \circ \pi$, the reduced section of $\Pi_P/K$. Then, 

$$
\begin{aligned}
 \nabla^{\sigma} s=&\left(\frac{\partial y^{A}}{\partial x^{i}}-\sigma_{i}^{A}\left(x^{j}\right)\right) \frac{\partial}{\partial y^{A}} +\left(\frac{\partial k^{\alpha}}{\partial x^{i}}-\sigma_{i}^{\alpha}\left(x^{j}\right)\right) \frac{\partial}{\partial k^{\alpha}} \\
=&\left(\frac{\partial y^{A}}{\partial x^{i}}-\frac{\partial h}{\partial \mu_{A}^{i}}\left(\mu\left(x^{j}\right),\bar{s}\left(x^{j}\right)\right)-\Lambda_{i}^{A}\left(x^{j}\right)\right) \frac{\partial}{\partial y^{A}} \\
&+ \left(\frac{\partial k^{\alpha}}{\partial x^{i}}-\frac{\partial h}{\partial \mu_{\alpha}^{j}}\left(\mu\left(x^{j}\right), \bar{s}\left(x^{j}\right)\right)-\Lambda_{i}^{\alpha}\left(x^{j}\right)\right) \frac{\partial}{\partial k^{\alpha}} \\
=&\left(\frac{\partial y^{A}}{\partial x^{i}}-\frac{\partial H}{\partial p^{i}_A}\left(\pi\left(x^{j}\right)\right)-\Lambda_{i}^{A}\left(x^j\right)\right) \frac{\partial}{\partial y^{A}}
+
\left(\frac{\partial k^{\alpha}}{\partial x^{i}}-\frac{\partial H}{\partial p^{i}_\alpha}\left(\pi\left(x^{j}\right)\right)-\Lambda_{i}^{\alpha}\left(x^{j}\right)\right) \frac{\partial}{\partial k^{\alpha}} =0 \\
\end{aligned}
$$
where, in the second equivalence, we have used that from Proposition \ref{prop: diff identification},
$$(\mu,\bar{s})^*\left(\frac{\partial h}{\partial \mu^i_A}\right)=\pi^*\left(\Psi^*\left(\frac{\partial h}{\partial \mu^i_A}\right)\right)=\pi^*\left(\frac{\partial H}{\partial \pi^i_A}\right).$$
Furthermore, in the third equivalence we have used that $\pi$ satisfies the Hamilton--de Donder equations. Since $s(x)$ is parallel with respect to $\sigma$, we conclude that $\Curv(\sigma)=0$.

Given $(\mu, \bar{s})$ solutions of the reduced Hamiltonian system and the flat connection $\sigma$, we shall construct a section $\pi(x)$ of $\Pi_P\to M$ that solves the original problem such that $(\mu, \bar{s})=\Psi\circ\pi$. First, we obtain a section $s\in\Gamma(P\to M)$ that projects to $\bar{s}\in\Gamma(P/K\to M)$. Since $\Curv(\sigma)=0$ and the holonomy is trivial, the integral leaves of $\sigma$ are given by $\hat{s} \cdot g$, $g \in G$, for certain section $\hat{s}$ of $P\to M$.  
Fix a point $x_0 \in M$. Then, there exists $g \in G$ such that the image of $\hat{s}(x_0)g$ under the projection $\pr_{P/K, P}$ coincides with $\bar{s}(x_0)$, that is,
\[
\pr_{P/K, P}(\hat{s}(x_0)g) = \bar{s}(x_0).
\]
Define $s := \hat{s} \cdot g$. Since $s$ is an integral leaf of $\sigma$, it follows that $\pr_{P/K, P}(s)$ is a parallel section of $P/K \to M$ with respect to the induced connection $\sigma_K$. By the Frobenius theorem, the sections $\bar{s}$ and $\pr_{P/K, P}(s)$ must coincide, as they are both parallel and agree at a single point. 

Then, we construct $\pi(x)$ as the only element in $\left(\Pi_P\right)_{s(x)}$ that projects to $\mu(x).$ This is always possible as a direct application of the reduction of vector bundles over a principal bundle to the following diagram.

\begin{center}
\begin{tikzpicture}[scale=1.5] \label{quo.vect.ble}
\node (0) at (0,1.2) {$\Pi_P$};
\node (A) at (3,1.2) {$\Pi_P/G\cong\left(TM\otimes\g^*\otimes\bigwedge\nolimits^{n-1}T^*M\right)$};
\node (B) at (3,0) {$M=P/G$};
\node (C) at (0,0) {$P$};
\draw[->,font=\scriptsize,>=angle 90]
(0) edge node[above]{} (A)
(0) edge node[right]{} (C)
(C) edge node[above]{} (B)
(A) edge node[right]{} (B);
\draw[->, blue, >=angle 90, bend right=20] 
(B) to node[right] {$\mu(x)$} (A);

\draw[->, blue, dashed, >=angle 90] 
(B) to node[xshift=2pt, yshift=3pt, right] {$\pi(x)$} (0);

\draw[->, blue, >=angle 90, bend left=10] 
(B) to node[below right] {$s(x)$} (C);
\end{tikzpicture}
\end{center}

From Theorem \ref{th: reduced dynamics}, as $(\mu,\bar{s})=\Psi\circ\pi$ and $(\mu,\bar{s})$ solves the reduced Hamiltonian system, $\pi(x)$ is a solution of the original problem as desired.

\end{proof}

\begin{remark}
    Let $\pi$ be solution of the unreduced system recosntructed from $(\mu, \bar{s})$ with the procedure of Theorem \ref{th: reconstruction}. As the Hamiltonian system is $K$-invariant, the sections of the type $R_k \circ \pi$, $k \in K$, are solutions of the unreduced problem. In fact, these are all the solutions of the unreduced problem projecting to $(\mu, \bar{s})$.
    Furthermore, from the proof of Theorem \ref{th: reconstruction}, 
    $$s(x)=\pr_{P,\Pi_P}(\pi(x))$$
    is an integral leaf of $\sigma$ and $R_k \circ s$, $k \in K$, are all the integral leaves of $\sigma$ projecting to $\bar{s}$.
\end{remark}

Theorem \ref{th: reconstruction} resembles the reconstruction results shown in \cite{EPsubgroup} for the Lagrangian picture. In that paper, given
$L : J^1P \to \mathbb{R}$ a $K$-invariant Lagrangian and 
$$l:J^1P/K\cong\mathcal{C}\times_M(P/K)\to\R,$$ 
the reduced Lagrangian, it is shown that a solution $(\sigma, \bar{s})$ of the Euler-–Poincar\'e equations for $l$ is the reduction of a solution of $s$ of the original variational problem defined by $L$ if and only if $\sigma$ is a flat connection with trivial holonomy and $\bar{s}$ is parallel with respect to $\sigma$. In analogy with the final result from Theorem \ref{th: reconstruction}, for non-simply-connected manifolds, as the holonomy of any flat connection is locally trivial, they obtain the local equivalence
\[
\mathcal{E}\mathcal{L}(L)(s) = 0 \Longleftrightarrow 
\begin{cases}
\mathcal{E}\mathcal{P}(l)(\sigma, \bar{s}) = 0 \\
\mathrm{Curv}(\sigma) = 0 \\
\nabla^\sigma \bar{s} = 0.
\end{cases}
\]

The main difference between the Hamiltonian picture and the Lagrangian counterpart in \cite{EPsubgroup} lies in how the reconstruction conditions are incorporated. In that work, the reduction of the variational problem yields only the Euler–Poincaré equations, while the conditions $\mathrm{Curv}(\sigma) = 0$ and $\nabla^\sigma \bar{s} = 0$ appear separately as part of the reconstruction process. In contrast, the present paper shows that when reducing the Hamilton--de Donder equations, the condition $\nabla^\sigma \bar{s} = 0$ emerges from the reduction itself, and only $\mathrm{Curv}(\sigma) = 0$ must be imposed as an additional reconstruction condition.


\begin{remark}
    The compatibility condition in Theorem \ref{th: reconstruction} is local in the case of non-simply-connected manifolds. This is illustrated in the following example. Consider the additive group $G = \R^2$ and $K = \R$, its first component. Let $M = S^1$ and $P = M \times G \to M$, whose sections are functions $s(\theta) = (\theta, x(\theta), y(\theta))$. Consider the Hamiltonian 
    $$H(\theta; x, y, \pi_x, \pi_y) = \frac{1}{2} \left((\pi_x)^2 + (\pi_y)^2 - y^2\right),$$
    which is invariant under translations along the $x$-axis. Since $P$ is a product, we take the connection $\Lambda$ to be trivial. The resulting Hamilton--de Donder equations are:
    \begin{alignat}{2}
    \pi_x &= x' \quad &\quad 0 &= \pi_x' \label{eq: nsc Hamilton x} \\
    \pi_y &= y' \quad &\quad y &= \pi_y'. \label{eq: nsc Hamilton y}
    \end{alignat}
    and are straightforward to solve. Equations \eqref{eq: nsc Hamilton x} imply $x'' = 0$. Given the $2\pi$-periodicity of $x(\theta)$, we conclude that $x(\theta) = x_0$ is constant, and hence $\pi_x(\theta) = 0$. Similarly, Equations \eqref{eq: nsc Hamilton y} imply $y'' - y = 0$, and the periodicity condition imposes $y(\theta) = 0$, so $\pi_y(\theta) = 0$.

The reduced Hamiltonian is defined on
$$(\Pi_P/\R) \times_{S^1} (S^1 \times \mathbb{R}) = \left(TS^1 \otimes (\R^2)^{*} \otimes T^*S^1\right)\times_{S^1} (S^1 \times \mathbb{R})$$
as
$$h(\theta; \mu_x, \mu_y, y) = \frac{1}{2} \left(\mu_x^2 + \mu_y^2 - y^2\right),$$
where $\bar{s} = y$. As the group $G$ is abelian, the reduced Hamilton equations are
\[
\frac{d}{d\theta} \frac{\delta h}{\delta \mu} + P^+_{\bar{s}} \left(\frac{\delta h}{\delta \bar{s}}\right) = 
\left( \frac{d \mu_x}{d\theta}, \frac{d \mu_y}{d\theta} \right) + (0, -y) = 0,
\]
which are simply,
\begin{equation} \label{eq: nsc Hamiltonian red}
\frac{d \mu_x}{d\theta} = 0, \qquad \frac{d \mu_y}{d\theta} = y;
\end{equation}
together with the parallel condition $\nabla^{\sigma_K} \bar{s} = 0$. Since $\sigma = (\mu_x d\theta, \mu_y d\theta)$ and $\sigma_K = \mu_y d\theta$, the latter becomes
\begin{equation} \label{eq: nsc parallel}
y' - \mu_y = 0.
\end{equation}
The $2\pi$-periodic solutions of \eqref{eq: nsc Hamiltonian red} and \eqref{eq: nsc parallel} are $\mu_x(\theta) = \mu_0$ constant, $\mu_y = 0$ and $y = 0$.
As $\dim M = 1$, the compatibility condition $\mathrm{Curv}(\sigma) = 0$ is trivially satisfied and provides no additional constraints. Hence, if  $\mu_x(\theta) = \mu_0 \neq 0$, there is no solution of the original problem projecting to it.
\end{remark}

\section{Examples} \label{sec: examples}

\subsection{Heavy Top}\label{subsec: heavy top}

In this Subsection, we apply the Lie--Poisson reduction developed above to a well-known system in classical Mechanics: the heavy top. The resulting equations of motion are compatible with the dynamics described in \cite{MRbook}. Consider the trivial principal bundle $P = \mathbb{R} \times SO(3) \to \mathbb{R} = M$ and the subgroup $K = SO(2)$ of the structure group $G= SO(3)$. The polysymplectic space of this system is
\begin{equation*}
    \Pi_P = (T\R \otimes V^*P\otimes T^*\R ) = \mathbb{R} \times T^*(SO(3)),
\end{equation*}
and sections of this bundle are denoted as $(t,R(t),\pi(t))$. In accordance with identification \eqref{eq: map local}, since $S^2 = SO(3)/SO(2)$,  the reduced polysymplectic space is 
\begin{equation*}
    (\Pi_P)/K = (T\R \otimes \mathfrak{so}^*(3)\otimes T^*\R) \times_{\R} (\R \times S^2) = \mathbb{R} \times (\mathfrak{so}^*(3) \times S^2),
\end{equation*}
and sections $(\sigma, \bar{s})$ of this bundle are written as
\[
(\sigma, \bar{s})(t) = (t, \mu(t), \Gamma(t))
\]
for certain curves
\[
\mu : \mathbb{R} \to \mathfrak{so}^*(3), \quad \Gamma : \mathbb{R} \to S^2.
\]
In fact, $\mathfrak{so}(3)$, and by duality $\mathfrak{so}^*(3)$, are identified with $\R^3$ in the standard way
\begin{equation*}
    \begin{pmatrix}
    0 & c & b \\
    -c & 0 & a \\
    -b & -a & 0
    \end{pmatrix}
\mapsto (a,b,c),
\end{equation*}
and under this identification the Lie Bracket transforms into the cross product. 

We now introduce the Hamiltonian \( H: TSO(3) \to \mathbb{R} \) that models the (right invariant) heavy top, which is defined as
\begin{equation} \label{eq: heavy top H}
    H(R, \pi) = \frac{1}{2} \llangle\pi, \mathbb{I}^{-1}\pi\rrangle + mg \langle R \cdot \mathbf{e}_3, \chi \rangle
\end{equation}
where $\llangle \cdot, \cdot \rrangle$ denotes a right invariant metric in $SO(3)$, \( \mathbb{I} \) is the (right) inertia tensor,
\( \mathbf{e}_3 \in \mathbb{R}^3  \) corresponds to the third vector of the canonical basis interpreted as the vertical direction  with respect to the gravity field, \( \chi \) is the vector joining the fixed point of the top with its center of mass, and $\langle \cdot, \cdot \rangle$ represents the Euclidean inner product in $\mathbb{R}^3$. Although the classical formulation of the heavy top is typically expressed using a left-invariant Hamiltonian, we adopt the right-invariant perspective here to remain consistent with the formulation developed in the preceding sections. Nevertheless, analogous results can be readily derived in the left-invariant setting.

The reduced Hamiltonian $h: \mathbb{R} \times (\mathfrak{so}^*(3) \times S^2) \to \mathbb{R}$ is
\begin{equation} \label{eq: heavy top h}
    h( \mu, \Gamma) = \frac{1}{2} \llangle \mu, \mathbb{I}^{-1}\mu \rrangle + mg \langle \Gamma, \chi \rangle,
\end{equation}
and a curve $ (t, R(t), \pi(t)) \in \Pi_P \simeq \mathbb{R} \times T^*SO(3)$ projects under $\Psi$ in Proposition \ref{prop: map} to the pair of curves
\[
(t, R(t), \pi(t)) \mapsto (\mu = \pi \cdot R^{-1}, \quad \Gamma = R \cdot e_3).
\]
The equations of motion of the reduced system are easily obtained from Theorem \ref{th: reduced dynamics}. On one hand, the adjoint operator $\mathsf{P}^{+}_{\bar{s}}$ in Equation \eqref{eq: Ps adjoint} particularizes to
\begin{align*} \label{eq: Heavy Top adjoint}
    \mathsf{P}_{\Gamma}^+: C^\infty(\mathbb{R}, T^*S^2) &\to C^\infty(\mathbb{R}, \mathfrak{so}(3)^*) \\
    \Upsilon &\mapsto \Gamma \times \Upsilon.
\end{align*}
Then, as $\frac{\delta h}{\delta \mu}=\mathbb{I}^{-1}\mu,$ Equation \eqref{eq: Lie-Poisson} reads as
\begin{equation} \label{eq: Heavy Top Lie-Poisson}
    \frac{d\mu}{dt} + (\mathbb{I}^{-1}\mu) \times \mu + mg \Lambda \times \chi = 0.
\end{equation}
On the other hand, as $\Lambda$ is trivial, $\sigma$ is defined by $\mathbb{I}^{-1}\mu\in\Gamma(T^*M\otimes\R^3\to M)$ and the parallel Equation \eqref{eq: parallel} particularizes as 
\begin{equation} \label{eq: Heavy Top parallel}
    \frac{d\Gamma}{dt} +  \left(\mathbb{I}^{-1}\mu\right) \times \Gamma = 0.
\end{equation}
Finally, the compatibility condition stated in Theorem \ref{th: reconstruction}, namely  \( \text{Curv}(\sigma) = 0 \), is identically satisfied in this case as $ \dim M = 1 $.

\subsection{\texorpdfstring{$SO(3)$-strand with Broken Symmetry}{SO(3)-strand with Broken Symmetry}}
\label{subsec:strand}
The preceding example can therefore be treated entirely within the framework of classical mechanics. We now turn to an example that genuinely lies within the domain of field theory, where $ \dim M > 1 $.

A $G$-strand is a map $R(s,t)\colon \R\times\R\to G$ arising from a class of $G$-invariant Hamiltonians that generalize a vast amount of classical chiral models. See \cite{matrixgstrands,gstrands}.
The reduction procedure developed in the previous sections provides a framework for analyzing $G$-strands when an extra term breaks the symmetry, that is, the group of symmetry is a subgroup $K$ of $G$. In this Section, we study an $SO(3)$-strand, a physically relevant example that models spins chains or a molecular strand as in \cite{MolStrand}. A symmetry breaking term, which is reminiscent of the heavy top, is added to model an external uniform electric field. 

Consider a trivial principal bundle \( P = \mathbb{R}^2 \times SO(3) \to \mathbb{R}^2 \) with structure group \( G = SO(3) \). We can take local coordinates \( (s,t) \) in \( \mathbb{R}^2 \) (\( s \) for space and \( t \) for time), identify sections of $P$ with maps \( R: \mathbb{R}^2 \to SO(3) \), and the sections of the polysymplectic bundle
\begin{equation*}
    \Pi_P
= T\R^2 \otimes V^*P \otimes \bigwedge\nolimits^2 T\R^2
= T\R^2 \otimes T^{*}SO(3) \otimes \bigwedge\nolimits^2 T\R^2,
\end{equation*}
with functions $\pi^s,\pi^t\colon \R^2\to T^*SO(3)$. Let $\Lambda$ be the trivial Ehresmann connection on the product bundle $P$, $\vol=ds\wedge dt$, and consider the following Hamiltonian
\begin{equation} \label{eq: Hamiltonian strand}
    H\bigl(s,t,R,\pi^s,\pi^t\bigr)
    = -\frac{1}{2}\,\llangle \pi^s, \mathbb{J}^{-1}\pi^s \rrangle
    + \frac{1}{2}\,\llangle \pi^t, \mathbb{I}^{-1}\pi^t \rrangle
    + m g \,\langle R \mathbf{e}_3, \chi \rangle,
\end{equation}
where $\mathbb{J}$ is a tensor describing the opposition to the rotation of consecutive rigid bodies of the strand. The rest of the notation is shared with the Heavy Top example in Subsection \ref{subsec: heavy top}. The first two terms of Hamiltonian \eqref{eq: Hamiltonian strand} are $SO(3)$ invariant while the last one is simply $SO(2)$ invariant then the group of symmetries is $K=SO(2)$. Since $P/K=SO(3)/SO(2)=S^2 $, from identification \eqref{eq: map local}, the reduced polysymplectic space is
\begin{equation*} \label{eq: red Hamiltonian strand}
    \Pi_P / K
    = \bigl( T\R^2 \otimes \mathfrak{so}^*(3) \otimes \bigwedge\nolimits^2 T^*\R^2 \bigr)
    \times_{\R^2} \bigl( \R^2 \times S^2 \bigr)\simeq \mathbb{R}^2\times (\mathbb{R}^2\otimes \mathfrak {so}^*(3)\times S^2),
\end{equation*}
and momenta $\pi^s,\pi^t$ project to $\mu^s=\pi^sR^{-1}$, $\mu^t=\pi^tR^{-1}$, and $\Gamma=R \mathbf{e}_3$. Hence, the reduced Hamiltonian is the following,
\begin{equation}
   h\bigl(s,t,\mu^s,\mu^t,\Gamma\bigr)
    = -\frac{1}{2}\,\llangle \mu^s, \mathbb{J}^{-1}\mu^s \rrangle
    + \frac{1}{2}\,\llangle \mu^t, \mathbb{I}^{-1}\mu^t \rrangle
    + m g\,\langle \Gamma, \chi \rangle. 
\end{equation}

We shall now study the reduced equations of motion. On one hand, Equation \eqref{eq: Lie-Poisson} reads as
\begin{equation} \label{eq: Strand Lie-Poisson}
     \frac{d\mu^s}{ds} + \frac{d\mu^t}{dt} - (\mathbb{J}^{-1}\mu^s) \times \mu^s + (\mathbb{I}^{-1}\mu^t) \times \mu^t + mg \Lambda \times \chi = 0.
\end{equation}
On the other hand, as $\Lambda$ is trivial, the connection defined in Remark \ref{rmk: connection sigma} is
\begin{equation*}
    \sigma
= \bigl( \mu \oplus \Gamma \bigr)^*\!\left(\frac{\delta h}{\delta \mu}\right) + \Lambda
= \left( \frac{\partial}{\partial s} - \mathbb{J}^{-1}\mu^s \right)\! ds
  + \left( \frac{\partial}{\partial t} + \mathbb{I}^{-1}\mu^t \right)\! dt
\end{equation*}
and the parallel Equation \eqref{eq: parallel} provides the conditions
\begin{equation} \label{eq: Strand parallel 1}
    \frac{d\Gamma}{ds} -  \left(\mathbb{J}^{-1}\mu^s\right) \times \Gamma = 0.
\end{equation}
\begin{equation} \label{eq: Strand parallel 2}
    \frac{d\Gamma}{dt} +  \left(\mathbb{I}^{-1}\mu^t\right) \times \Gamma = 0.
\end{equation}
Finally, the reconstruction condition stated in Theorem \ref{th: reconstruction}, namely  \( \text{Curv}(\sigma) = 0 \), is 
\begin{equation} \label{eq: Strand reconstruction}
    \mathbb{J}^{-1}\frac{d\mu^t}{ds} + \mathbb{I}^{-1}\frac{d\mu^t}{ds}-\mathbb{J}^{-1}\mu^s \times \mathbb{I}^{-1}\mu^t=0, 
\end{equation}
which is consistent with the reconstruction condition obtained in the Lagrangian setting for $SO(3)$-strands in \cite{matrixgstrands} expressed using the body angular velocity  $\Omega=\mathbb{J}^{-1}\mu^s$ and the body angular strain $\omega=\mathbb{I}^{-1}\mu^t$.
\subsection{Reduction in Affine principal bundles} \label{subsec: affine}

Let $\pi : P \to M$ be a $G$--principal bundle and let $G \times V \to V$ be a left linear representation of $G$ on a vector space $V$. Denote by
\[
E := (P \times V)/G \to M
\]
the associated vector bundle and consider the affine group 
$G_{\mathrm{aff}} := G \ltimes V$
with group law
\begin{equation} \label{eq: aff action}
    (g,v)\cdot(g',v') = (gg',\, gv' + v),
    \qquad g,g' \in G,\; v,v' \in V.
\end{equation}
Its Lie algebra is
$\mathfrak{g}_{\mathrm{aff}} = \mathfrak{g} \oplus V$
with bracket
\begin{equation} \label{eq: aff bracket}
    [(B,v),(B',v')] = ([B,B'],\, Bv' - B'v),
\qquad B,B' \in \mathfrak{g},\; v,v' \in V.
\end{equation}
The action of $G_{\mathrm{aff}}$ on $V$ is affine in the sense that
\[
(g,v)\cdot u = gu + v.
\]
Observe that \(G\) can be regarded as a closed subgroup of the affine group
\(G_{\mathrm{aff}}\) through the canonical embedding \(g \mapsto (g,0)\).

The \em affine principal bundle defined by $P \to M$ and the representation of $G$ on $V$ \em is a $G_{\mathrm{aff}}$--principal bundle with total space
\[
P_{\mathrm{aff}} := P \times_M E \to M
\]
and right action
\begin{equation} \label{eq: aff cov action}
    R_{(g,v)}(u_x,e_x) = (u_x g,\, e_x + [u_x,v]_G),
\end{equation}
where $[u_x,v]_G$ denotes the class of $(u_x,v)$ in $E_x$. $G$--invariant Field Theories   
on $P_{\mathrm{aff}}$ have already been studied, both from the Lagrangian \cite{affinered,EPsubgroup} and Hamiltonian perspective \cite{HamAffine}. Our aim is to frame the reduction of field theories on affine principal bundles within the reduction procedure presented in this paper.

Before advancing to studying dynamics, we shall first study some geometric aspects of $P_{\mathrm{aff}}$. The adjoint bundle of $P_{\mathrm{aff}}$ is naturally identified with
$\tilde{\mathfrak{g}}_{\mathrm{aff}}
\simeq \tilde{\mathfrak{g}} \oplus E$,
and the induced bracket on sections $(\eta,\xi),(\eta',\xi')$ is
\begin{equation} \label{eq: aff cov bracket}
[(\eta,\xi),(\eta',\xi')] =
([\eta,\eta'],\, \eta\cdot\xi' - \eta'\cdot\xi).
\end{equation}
Moreover, the quotient of $P_{\mathrm{aff}}$ by the linear subgroup $G \subset
G_{\mathrm{aff}}$ is given by
\[
P_{\mathrm{aff}}/G
= (P_{\mathrm{aff}} \times (G_{\mathrm{aff}}/G))/G_{\mathrm{aff}}
\simeq (P_{\mathrm{aff}} \times V)/G_{\mathrm{aff}},
\]
We denote this associated affine bundle by
$$
E_{\mathrm{aff}} := (P_{\mathrm{aff}}\times V)/G_{\mathrm{aff}}
$$
as it is canonically identified with $E$, regarded as an affine bundle modeled on itself.

Let $\omega_{\mathrm{aff}}$ be a $G_{\mathrm{aff}}$--principal connection $1$--form on
$P_{\mathrm{aff}} \to M$.
Using the decomposition
$\mathfrak{g}_{\mathrm{aff}} = \mathfrak{g} \oplus V$,
the pullback of $\omega_{\mathrm{aff}}$ along the bundle inclusion $i:  P \hookrightarrow P_{\mathrm{aff}}$ splits as
\begin{equation} \label{eq: affine conn split}
    i^*\omega_{\mathrm{aff}} = \omega + h,
\end{equation}
where $\omega$ is a principal connection $1$--form on $P \to M$, and $h$ is a $V$--valued tensorial $1$--form on $P$. Such form $h$ is known to descend to a $1$--form on $M$ with values in $E$, see for instance \cite[Ch. II]{nomizu}. Thus, there is a bijection between affine connections on $P_{\mathrm{aff}}$
and pairs $(\sigma,h)$ consisting of a connection $\sigma$ on $P$
and a $1$--form $h \in \Omega^1(M,E)$.

Let $\sigma_{\mathrm{aff}}=(\sigma,h)$ be an affine connection.
For a section $\bar{s}_{\mathrm{aff}} \in \Gamma(E_{\mathrm{aff}})$,
identified with $\bar{s}\in\Gamma(E)$, one has
\begin{equation} \label{eq: aff connection}
    \nabla^{\sigma_{\mathrm{aff}}}\bar{s}_{\mathrm{aff}}
    = \nabla^\sigma \bar{s} + h.
\end{equation}
For a section $(\eta,\xi)\in\Gamma(\tilde{\mathfrak{g}}_{\mathrm{aff}})$,
the covariant derivative reads
\begin{equation} \label{eq: aff derivative}
\nabla^{\sigma_{\mathrm{aff}}}(\eta,\xi)
= \bigl(\nabla^\sigma\eta,\; \nabla^\sigma\xi - \eta\cdot h\bigr),
\end{equation}
and provided 
$\mu_{\mathrm{aff}} = (\mu,\zeta)
\in \Gamma(TM\otimes\tilde{\mathfrak{g}}_{\mathrm{aff}}^*)
\simeq \Gamma(TM\otimes\tilde{\mathfrak{g}}^*)
\oplus \Gamma(TM\otimes E^*),$
the divergence operator satisfies
\begin{equation} \label{eq: aff divergence}
\operatorname{div}^{\sigma_{\mathrm{aff}}}\mu_{\mathrm{aff}}
=
\bigl(
\operatorname{div}^\sigma \mu + h\otimes\zeta,\;
\operatorname{div}^\sigma \zeta
\bigr),
\end{equation}
where $h\otimes\zeta\in \Gamma(E\otimes E^*)$ coupling the $T^*M$ and $TM$ parts of $h$ and $\zeta$. Furthermore, the bundle $\Gamma(E \otimes E^*)$ is naturally embedded into
$\Gamma(\tilde{\mathfrak g}^{\,*})$ by the pairing
\begin{equation} \label{eq: EtimesE*}
\langle e \otimes e^*,\eta \rangle
:= \langle e^*,\, \eta \cdot e \rangle,
\qquad
\forall\, e \in \Gamma(E),\;
e^* \in \Gamma(E^*),\;
\eta \in \Gamma(\tilde{\mathfrak g}),   
\end{equation}
where $\eta \cdot e$ denotes the infinitesimal action of
$\tilde{\mathfrak g}$ on $E$.

For a section $\bar{s}_{\mathrm{aff}} \in \Gamma(E_{\mathrm{aff}})$, which we
identify with a section $\bar{s} \in \Gamma(E)$, the operator \eqref{eq: Ps} takes the form
\begin{align} \label{eq: aff Ps}
    \mathsf{P}_{\bar{s}_{\mathrm{aff}}} \colon
\Gamma(\tilde{\mathfrak{g}}_{\mathrm{aff}})
= \Gamma(\tilde{\mathfrak{g}}) \oplus \Gamma(E)
&\to \Gamma(\bar{s}^* V E_{\mathrm{aff}})
= \Gamma(E)  \nonumber \\
(\eta,\xi)
&\mapsto \eta \cdot \bar{s} + \xi,
\end{align}
where $\eta \cdot \bar{s}$ denotes the infinitesimal action of
$\tilde{\mathfrak{g}}$ on $E$, and the corresponding adjoint operator  \eqref{eq: Ps adjoint} is given by
\begin{align} \label{eq: aff Ps adjoint}
    \mathsf{P}^+_{\bar{s}_{\mathrm{aff}}} \colon \Gamma(E^*) 
    &\mapsto \Gamma(\tilde{\mathfrak{g}}_{\mathrm{aff}}^*)
    = \Gamma(\tilde{\mathfrak{g}}^*) \oplus \Gamma(E^*) \nonumber \\
    \omega &\mapsto (\omega \otimes \bar{s},\, \omega),
\end{align}
where $\omega \otimes \bar{s}$ is interpreted as a section of
$\tilde{\mathfrak{g}}^*$ as described in \eqref{eq: EtimesE*}.

Let $\mathcal{H}=H\vol$ be a $G$-invariant Hamiltonian on $P_{\mathrm{aff}}$, and let $\Lambda$ be a connection on $P_{\mathrm{aff}}\to M$ obtained from the pullback of a $G$-principal connection on $P\to M$, also denoted as $\Lambda$. That is, $\Lambda$ has no $h$ component in the decomposition \eqref{eq: aff connection}. From identification \eqref{eq: map local}, the reduced polysymplectic space is 
\begin{align}
\Pi_{P_{\mathrm{aff}}} / G &\simeq\left(T M \otimes \g_{\mathrm{aff}} \otimes \wedge^{n-1}T^*M\right) \times_{M}\left(P_{\mathrm{aff}} / G\right) \nonumber\\
&\simeq\left(T M \otimes\left(\g^* \oplus E^{*}\right) \otimes \wedge^{n-1}T^*M\right) \times_{M} E_{\mathrm{aff}},
\end{align}
and we denote by $(\mu, \omega, \bar{s})$ any element of $\Pi_{P_{\mathrm{aff}}} / G$, where $\mu\in TM \otimes\g^*\otimes \wedge^{n-1}T^*M$, $\omega\in TM \otimes E^*\otimes \wedge^{n-1}T^*M$, and $s\in E_{\mathrm{aff}}$.

Denote by $h$ the reduced Hamiltonian density in $\Pi_{P_{\mathrm{aff}}} / G$. As seen in Definition \ref{def: Reduced Bracket}, the reduced bracket of $h$ with a reduced Poisson $(n-1)$-form  $f$ determined by $(\eta, \xi) \in \Gamma(\g \oplus E)$ is the sum of two covariant brackets. From the splitting $\tilde{\mathfrak{g}}_{\mathrm{aff}}\simeq \tilde{\mathfrak{g}} \oplus E$, we can also write
$$\frac{\delta h}{\delta \mu_{\mathrm{aff}}}=\left(\frac{\delta h}{\delta \mu}, \frac{\delta h}{\delta \omega}\right).$$
Thus,
\begin{align}
\{f,h\}_{\mathrm{LP}} &= \left\langle(\mu, \omega),\left[(\eta, \xi),\left(\frac{\delta h}{\delta \mu}, \frac{\delta h}{\delta \omega}\right)\right]\right\rangle =\left\langle(\mu, \omega),\left(\left[\eta, \frac{\delta h}{\delta \mu}\right], \eta \cdot \frac{\delta h}{\partial \omega}-\frac{\delta h}{\delta \mu}\cdot \xi\right)\right\rangle \nonumber\\
& =\left\langle\mu,\left[\eta, \frac{\delta h}{\delta \mu}\right]\right\rangle-\left\langle\xi \otimes \omega, \frac{\delta h}{\delta \mu}\right\rangle+\left\langle\frac{\delta h}{\delta \omega} \otimes \omega, \eta\right\rangle
\end{align}
Furthermore,
\begin{align}
\{f,h\}_{E}&=\left\langle\frac{\delta f}{\delta\bar{s}},\mathsf{P}_{\bar{s}}\left(\frac{\delta h}{\delta \mu}, \frac{\delta h}{\delta \omega}\right)\right\rangle-\left\langle\frac{\delta h}{\delta\bar{s}},\mathsf{P}_{\bar{s}}(\eta,\xi)\right\rangle \nonumber\\
&=\left\langle\frac{\delta f}{\delta\bar{s}},
\frac{\delta h}{\delta \mu}\cdot\bar{s}+ \frac{\delta h}{\delta \omega}
\right\rangle-\left\langle\frac{\delta h}{\delta\bar{s}},\eta\cdot\bar{s}+\xi\right\rangle \nonumber \\
&=\left\langle\frac{\delta f}{\delta\bar{s}}\otimes\bar{s},\frac{\delta h}{\delta \mu}\right\rangle 
+\left\langle\frac{\delta f}{\delta\bar{s}},\frac{\delta h}{\delta \omega}
\right\rangle
-\left\langle\frac{\delta h}{\delta\bar{s}}\otimes\bar{s},\eta\right\rangle
-\left\langle\frac{\delta h}{\delta\bar{s}},\xi\right\rangle, 
\end{align}
and $$\{f,h\}=\{f,h\}_{\mathrm{LP}}+ \{f,h\}_{E}.$$

Taking into account the splitting $\tilde{\mathfrak{g}}_{\mathrm{aff}}\simeq \tilde{\mathfrak{g}} \oplus E$, the expression of the divergence induced by a $G_{\mathrm{aff}}$-connection \eqref{eq: aff divergence}, the adjoint operator \eqref{eq: aff cov bracket}, and the expression of $\mathsf{P}^+$ in \eqref{eq: aff Ps adjoint}, the reduced equations of motion \eqref{eq: Lie-Poisson} particularize into the following set of equations:
\begin{equation} \label{eq: example affine g}
    \operatorname{div}^{\Lambda}\mu
- \operatorname{ad}^{*}_{\frac{\delta h}{\delta \mu}} \mu
+ \frac{\delta h}{\delta \omega} \otimes \omega
+ \frac{\delta h}{\delta \bar{s}} \otimes \bar{s}
= 0,
\end{equation}
\begin{equation} \label{eq: example affine e}
    \operatorname{div}^{\Lambda}\,\omega
+ \left(\frac{\delta h}{\delta \mu}\right)^*\!\cdot \omega
+ \frac{\delta h}{\delta \bar{s}} = 0,
\end{equation}
where $\left\langle \left(\frac{\delta h}{\delta \mu}\right)^*\!\cdot \omega, \xi\right\rangle =\left\langle \omega, \left(\frac{\delta h}{\delta \mu}\right)\!\cdot  \xi\right\rangle$ for every $\xi\in E$. Furthermore, Equation \eqref{eq: parallel} becomes: 
\begin{equation} \label{eq: example affine holonomy}
    \nabla^{\sigma}\bar{s}
+ \frac{\delta h}{\delta \omega} = 0,
\end{equation}
where $\sigma=(\mu,\omega,\bar{s})^*\left(\frac{\delta h}{\delta \mu}+\Lambda\right)$ is the $\g$ component of the connection
\begin{equation*}
    \sigma_{\mathrm{aff}}=(\mu,\omega,\bar{s})^*\left(\frac{\delta h}{\delta\mu_{\mathrm{afff}}}\right)+\Lambda=(\mu,\omega,\bar{s})^*\left(\frac{\delta h}{\delta \mu}+\Lambda,\frac{\delta h}{\delta \omega}\right)
\end{equation*}
From Theorem \eqref{th: reconstruction}, the reconstruction condition is $\Curv(\sigma_{\mathrm{aff}}) = 0$ which is equivalent to $\Curv(\sigma) = 0$ since from Equation \eqref{eq: example affine e},
$$
\operatorname{Curv}(\sigma_{\mathrm{aff}})
=
\left(
\operatorname{Curv}(\sigma),
\nabla^{\sigma} \left( \frac{\delta h}{\delta \omega}\right)\right)
=
\left(
\operatorname{Curv}(\sigma),
\operatorname{Curv}(\sigma)\wedge\bar{s}
\right).$$
Equations \eqref{eq: example affine g} and \eqref{eq: example affine e} are compatible with the Lagrangian counterpart \cite[Eq. 33]{EPsubgroup} except that these equations are 
expressed using the background connection $\Lambda$ used to define a Hamiltonian density $\mathcal{H}$ instead of using the connection $\sigma$. In turn, Equation \eqref{eq: example affine holonomy} with one of the compatibility equation \cite[Eq. 34]{EPsubgroup}. The other compatibily condition in \cite{EPsubgroup} is $\Curv(\sigma) = 0$ which in this context is rather interpreted as a reconstruction condition.


Given $(\mu,\omega,\bar{s})\in\Gamma(\Pi_{P_{\mathrm{aff}}} / G)$ a solution of the reduced system, consider $\bar{\mu}=\mu-\omega\otimes\bar{s}\in\Gamma(T M \otimes\g^*)$.
For all $\eta\in$
\begin{align*}
\left\langle \operatorname{div}^{\Lambda}(\omega \otimes \bar{s}), \eta \right\rangle
&=  \operatorname{div}^{\Lambda}(\left\langle\omega , \eta \cdot \bar{s}\right\rangle)  
= \left\langle \operatorname{div}^{\Lambda} \omega, \eta \cdot \bar{s} \right\rangle
  + \left\langle \omega, \nabla^{\Lambda}(\eta \cdot \bar{s}) \right\rangle \\
&= \left\langle \operatorname{div}^{\Lambda} \omega \otimes \bar{s}, \eta \ \right\rangle
  + \left\langle \omega, \eta \cdot \nabla^{\Lambda} \bar{s} \right\rangle
  = \left\langle \operatorname{div}^{\Lambda} \omega \otimes \bar{s}
  + \nabla^{\wedge} \bar{s} \otimes \omega, \eta \right\rangle
\end{align*}
and 
\begin{align*}
\left\langle \operatorname{ad}^*_{\frac{\delta h}{\delta \mu}}\omega \otimes \bar{s}, \eta \right\rangle
&= \left\langle \omega \otimes \bar{s},
  \left[ \frac{\delta h}{\delta \mu}, \eta \right] \right\rangle
= \left\langle \omega,
  \left[ \frac{\delta h}{\delta \mu}, \eta \right]\cdot\bar{s} \right\rangle
= - \left\langle \omega,
  \frac{\delta h}{\delta \bar{\mu}}(\eta \cdot \bar{s}) \right\rangle \\
&= - \left\langle \left( \frac{\delta h}{\delta \mu} \right)^{*} \omega,
  \eta \cdot \bar{s} \right\rangle 
= - \left\langle
  \left( \frac{\delta h}{\delta \mu} \right)^{*} \omega \otimes \bar{s},
  \eta \right\rangle
\end{align*}
Hence, using equations \eqref{eq: example affine g}, \eqref{eq: example affine e} and \eqref{eq: example affine holonomy}; 
\begin{align} \label{eq: affine momentum}
\operatorname{div}^{\Lambda} \bar{\mu}
- \operatorname{ad}^{*}_{\frac{\delta h}{\delta \mu}} \bar{\mu}
&= \operatorname{div}^{\Lambda} \mu
 - \operatorname{div}^{\Lambda} (\omega \otimes \bar{s})
 - \operatorname{ad}^{*}_{\frac{\delta h}{\delta \mu}} \mu
 + \operatorname{ad}^{*}_{\frac{\delta h}{\delta \mu}} \omega \otimes \bar{s} \nonumber\\
&= \operatorname{div}^{\Lambda} \mu
 - (\operatorname{div}^{\Lambda} \omega ) \otimes \bar{s}
 - \nabla^{\Lambda} \bar{s} \otimes \omega
 - \operatorname{ad}^{*}_{\frac{\delta h}{\delta \mu}} \mu 
 - \left( \frac{\delta h}{\delta \mu} \right)^*
   \omega \otimes \bar{s} \nonumber\\
&= \operatorname{div}^{\Lambda} \mu
 - (\operatorname{div}^{\Lambda} \omega ) \otimes \bar{s}
 - \frac{\delta h}{\delta \omega} \otimes \omega
 - \operatorname{ad}^{*}_{\frac{\delta h}{\delta \mu}} \mu
 - \left( \frac{\delta h}{\delta \mu} \right)^*
   \omega \otimes \bar{s} \nonumber\\
&= \operatorname{div}^{\Lambda} \mu
 - \operatorname{ad}^{*}_{\frac{\delta h}{\delta \mu}} \mu
 - \frac{\delta h}{\delta \omega} \otimes \omega
 + \frac{\delta h}{\delta \bar{s}} \otimes \bar{s} =0,
\end{align}
and we conclude that  $\bar{\mu}$ is a conserved quantity along the solutions of the reduced Hamiltonian system. Indeed, from \eqref{eq: affine momentum} we conclude that $\operatorname{div}^{\sigma} \bar{\mu}=0$. In accordance with the conservation law obtained in \cite{affinered} and  \cite{EPsubgroup}.

\subsection{A vierbein Einstein-Palatini framework}

We illustrate how the Hamiltonian reduction procedure developed in this paper provides a natural geometric framework for field theories on the frame bundle, with particular relevance for gravity. 


A vierbein is nothing but a (local) moving frame, that is, a (local) section of the frame bundle $P=LM$ over a manifold $M$. We consider a subgroup $K$ of the structure group $G=GL(n)$. 

On the Lagrangian side, suppose that one has a first-order $K$-invariant theory on $J^1LM$. The variables of the reduced problem are the sections $(\sigma , \bar{s})$ of the reduced phase bundle $(J^1LM)/K\simeq C\times (LM/K)$, that is, a linear connection on $M$ and a reduction of the frame bundle. In particular, for $K=SO(1,n-1)$, sections of $LM/K$ correspond to Lorentzian metrics on $M$. One reduced equation is $\nabla ^\sigma \bar{s}=0$ which means that the connection $\sigma$ is metric. This is shared to any choice of Lagrangian, in particular, a Palatini Lagrangian. On the other hand, the reconstruction condition $\Curv(\sigma)=0$,
turns out to be too restrictive. As discussed in \cite{Capriotti2014},\cite{Capriotti2025}, this obstruction can be avoided by modifying the variational principle, replacing the contact ideal on $J^1P$ with an exterior differential system that encodes metricity and torsion constraints while allowing nontrivial curvature.

On the Hamiltonian side, let $\Pi_{LM}$ be the polysymplectic bundle and let $\mathcal{H}=H\vol$ be a $K$-invariant Hamiltonian density. From Proposition \ref{prop: map}, the reduced variables are sections
\[
(\mu,\bar{s}):M \to \Pi_{LM}/K \cong \left(TM\otimes \mathrm{End}^*(TM)\otimes \bigwedge\nolimits^{n-1}T^*M\right)\times_M P/K,
\]
where $\mu$ may be interpreted as polymomenta dual to a connection on $M$ and $\bar{s}$ is a Lorentzian metric on $M$. The reduced equations are given by Theorem~\ref{th: reduced dynamics}. Equation \eqref{eq: Lie-Poisson} yields the dynamical field equations, which are analogous to Einstein-type equations in this setting, and equation \eqref{eq: parallel} expresses again metric compatibility (see \cite{Capriotti2025}).

A key difference with the Lagrangian case is that no flatness condition appears in the reduced dynamics. By Theorem~\ref{th: reconstruction}, flatness of the induced connection
\[
\sigma = (\mu,\bar{s})^*\!\left(\frac{\partial h}{\partial \mu}\right) + \Lambda
\]
is required only for reconstruction of an unreduced solution. Thus, the reduced Hamiltonian equations describe a metric-affine theory with compatibility condition but unconstrained curvature.

We do not attempt here to construct a Hamiltonian formulation fully equivalent to Palatini gravity, but rather to highlight how the geometric structure of the reduced equations naturally accommodates metric-affine theories. To recover Palatini theory, one must further impose torsion constraints. Due to the singular nature of the Palatini Lagrangian, this typically requires restricting the phase space, as in the multisymplectic formulation of \cite{Gaset2019}. From the Hamiltonian point of view, this corresponds to working on a suitable constraint submanifold.

This example shows that, unlike in the Lagrangian reduction, where flatness is built into the variational structure, in the Hamiltonian framework flatness appears only as a reconstruction condition, allowing for a natural interpretation of the reduced equations as a Palatini-type theory.

\bibliography{references}{}
\bibliographystyle{abbrvbf}
\end{document}